\documentclass[11pt]{amsart}
\usepackage{amssymb,amscd,amsthm,latexsym}
\allowdisplaybreaks[3]
\usepackage{graphicx}
\usepackage[british]{babel}
\usepackage[all]{xy}
\usepackage{color}
\usepackage{mathabx}
\usepackage{calrsfs}
\usepackage{booktabs}
\usepackage[mathscr]{euscript} 

\theoremstyle{plain}
\newtheorem{theorem}[subsection]{Theorem}
\newtheorem{lemma}[subsection]{Lemma}
\newtheorem{proposition}[subsection]{Proposition}
\newtheorem{corollary}[subsection]{Corollary}

\theoremstyle{definition}
\newtheorem{definition}[subsection]{Definition}
\newtheorem{example}[subsection]{Example}
\newtheorem{examples}[subsection]{Examples}

\theoremstyle{remark}

\newtheorem{remark}[subsection]{Remark}

\newdir{ >}{
 @{}*!/-10pt/@{>} }
\newdir{ |>}{
 @{}*!/-5.5pt/@{|}*!/-10pt/:(1,-.2)@^{>}*!/-10pt/:(1,+.2)@_{>} }

\newdir{|>}{!/4.5pt/@{|}*:(1,-.2)@^{>}*:(1,+.2)@_{>}}

\newdir{ >>}{{}*!/8pt/@{|}*!/3.5pt/:(1,-.2)@^{>}*!/3.5pt/:(1,+.2)@_{>}}

\newcommand{\defn}[1]{\emph{#1}}

\newcommand{\To}{\Rightarrow}
\newcommand{\caixa}[1]{\framebox{\scriptsize\ensuremath{\mathrm{#1}}}}

\def\mathrmdef#1{\expandafter\def\csname#1\endcsname{{\rm#1}}}
\mathrmdef{max}
\mathrmdef{op}
\mathrmdef{prod}
\mathrmdef{lex}
\mathrmdef{id}

\newcommand{\C}{\ensuremath{\mathsf{C}}}
\newcommand{\CC}{\ensuremath{\mathbb{C}}}

\newcommand{\D}{\ensuremath{\mathsf{D}}}

\newcommand{\Grp}{\ensuremath{\mathsf{Grp}}}
\newcommand{\TopGrp}{\ensuremath{\mathsf{TopGrp}}}
\newcommand{\PTopGrp}{\ensuremath{\mathsf{PTopGrp}}}
\newcommand{\Ord}{\ensuremath{\mathsf{Ord}}}

\newcommand{\Ab}{\ensuremath{\mathsf{Ab}}}
\newcommand{\Mon}{\ensuremath{\mathsf{Mon}}}

\newcommand{\Set}{\ensuremath{\mathsf{Set}}}
\newcommand{\Nil}{\ensuremath{\mathsf{Nil}}}
\newcommand{\NN}{\ensuremath{\mathbb{N}}}
\newcommand{\RR}{\ensuremath{\mathbb{R}}}
\newcommand{\ZZ}{\ensuremath{\mathbb{Z}}}
\newcommand{\Pt}{\ensuremath{\mathrm{Pt}}}
\newcommand{\Eq}{\ensuremath{\mathrm{Eq}}}

\newcommand{\M}{\ensuremath{\mathscr{M}}}
\newcommand{\N}{\ensuremath{\mathscr{N}}}
\newcommand{\Sscr}{\ensuremath{\mathscr{S}}}

\newcommand{\OrdGrp}{\ensuremath{\mathsf{OrdGrp}}}
\newcommand{\ROrdGrp}{\ensuremath{\mathsf{ROrdGrp}}}
\newcommand{\OOrdGrp}{\ensuremath{\mathbb{O}}\ensuremath{\mathsf{rdGrp}}}

\def\mathrmdef#1{\expandafter\def\csname#1\endcsname{{\rm#1}}}
\newcommand{\la}{\langle}
\newcommand{\ra}{\rangle}
\newcommand{\mono}{\rightarrowtail}

\def\vpullback{
\ar@{-}[]+D+<6pt,-6pt>;[]+D+<0pt,-12pt>;%
\ar@{-}[]+D+<0pt,-12pt>;[]+D+<-6pt,-6pt>}

\def\pullback{
 \ar@{-}[]+R+<4pt,-3pt>;[]+RD+<4pt,-6pt>%
 \ar@{-}[]+D+<3pt,-6pt>;[]+RD+<4pt,-6pt>}

\def\ophalfsplitpullback{%
 \ar@{-}[]+R+<6pt,-1pt>;[]+RD+<6pt,-6pt>%
 \ar@{-}[]+D+<.5ex,-6pt>;[]+RD+<6pt,-6pt>}

\def\pushout{%
 \ar@{-}[]+LU+<-5pt,0pt>;[]+LU+<-5pt,5pt>%
 \ar@{-}[]+LU+<0pt,5pt>;[]+LU+<-5pt,5pt>}

\def\ophalfsplitpushout{%
 \ar@{-}[]+L+<-9pt,1pt>;[]+LU+<-9pt,9pt>%
 \ar@{-}[]+U+<-.5ex,9pt>;[]+LU+<-9pt,9pt>}

\usepackage{chngcntr}
\counterwithin*{equation}{section}

\def\mathrmdef#1{\expandafter\def\csname#1\endcsname{{\rm#1}}}
\mathrmdef{max}
\mathrmdef{op}
\mathrmdef{Nat}
\mathrmdef{sym}

\newcommand{\bkR}{\mathbb{R}}
\newcommand{\bkN}{\mathbb{N}}
\newcommand{\bkZ}{\mathbb{Z}}

\newcommand{\Cat}{\ensuremath{\mathsf{Cat}}}
\newcommand{\Top}{\ensuremath{\mathsf{Top}}}

\def\relto{{\rightarrow\hspace*{-2.4ex}{\mapstochar}\hspace*{2.2ex}}}

\newcommand{\Mor}{\ensuremath{\mathscr{M}\mathsf{or}}}
\newcommand{\Mono}{\ensuremath{\mathscr{M}\mathsf{ono}}}
\newcommand{\SMono}{\ensuremath{\mathscr{SM}\mathsf{ono}}}
\newcommand{\Spl}{\ensuremath{\mathscr{S}\mathsf{pl}}}
\newcommand{\PSpl}{\ensuremath{\mathscr{PS}\mathsf{pl}}}
\newcommand{\Ker}{\ensuremath{\mathrm{Ker}}}

\newcommand{\ab}{\ensuremath{\mathrm{ab}}}
\newcommand{\coker}{\ensuremath{\mathrm{coker}}}

\begin{document}

\title{How protomodular is your favourite category?}

\author{Maria Manuel Clementino}
\address{CMUC, Department of Mathematics, University of
Coimbra, 3000-143 Coimbra, Portugal}\thanks{}
\email{mmc@mat.uc.pt}

\author{Diana Rodelo}
\address{Department of Mathematics, University of the Algarve, 8005-139 Faro, Portugal and Center for Research and Development in Mathematics and Applications (CIDMA), Department of Mathematics, University of Aveiro, 3810-193 Aveiro, Portugal}
\thanks{The authors acknowledge partial financial support by CMUC (doi.org/10.54499/UID/00324/2025), funded by the Portuguese
Foundation for Science and Technology (FCT), grants UID/00324/2025 and UID/PRR/00324/2025. The second author also
acknowledges financial support by CIDMA (https://ror.org/05pm2mw36), funded by the Portuguese Foundation for Science
and Technology (FCT, https://ror.org/00snfqn58), grants UID/04106/2025 (https://doi.org/10.54499/UID/04106/2025) and
UID/PRR/04106/2025.
}

\email{drodelo@ualg.pt}

\keywords{regular categories, categories enriched in the category of preorders, protomodular categories, Mal'tsev categories}

\subjclass{18E08, 
18E13, 
18D20, 
08C05, 
18B25}

\begin{abstract} In this work we investigate a relative notion of protomodularity. We say that a finitely complete category $\C$ satisfies the \defn{$\M$-version of protomodularity} when the change-of-base functors (concerning the fibration of points) are conservative with respect to a class of morphisms $\M$ of $\C$. If $\M$ is the class of all morphisms or the class of all monomorphisms of $\C$, then this notion is precisely that of an ``ordinary'' protomodular category. We analyse what conditions the class $\M$ should have to obtain the relative $\M$-versions of some well-known properties of protomodular categories, such as the Split Short Five Lemma (when $\C$ is also pointed) and their relation with Mal'tsev categories. We also give diverse examples of categories satisfying the $\M$-version of protomodularity for different choices of $\M$.
\end{abstract}

\maketitle

\section*{Introduction}

Protomodular categories were introduced in~\cite{NEKEAC} with the purpose to provide a good categorical setting to capture group-like features. The main example of a (pointed) protomodular category is $\Grp$. Any other variety (of universal algebras) with more algebraic structure on its objects is still protomodular. On the other hand, the algebraically weaker variety $\Mon$ of monoids is not protomodular (for more examples of protomodular categories see~\cite{NEKEAC, MCFPO,BB}).

A finitely complete category $\C$ is called \defn{protomodular}~\cite{NEKEAC} when the change-of-base functors (with respect to the fibration of points $\Pi\colon \Pt(\C)\to \C$) $h^*\colon \Pt_Y(\C)\to \Pt_Z(\C)$ are conservative for any morphism $h\colon Z\to Y$. When $\C$ is also pointed, we can reduce our attention to morphisms of the type $h=0_Y\colon 0\to Y$ and the conservativeness of the kernel functors $0_Y^*=\ker\colon \Pt_Y(\C)\to \C$ for any object $Y$. It easily follows that a pointed finitely complete category is protomodular if and only if the Split Short Five Lemma holds in $\C$ (see Theorem~\ref{thm: characterization of protomodularity})
\begin{equation}\label{diag: SS5L in introduction}
	\vcenter{\hbox{%
	\xymatrix@=25pt{ K' \ar@{|>->}[r]^-{\ker(f')} \ar[d]|-{\cong} & X' \ar@<2pt>[r]^-{f'} \ar[d]_-m & Y' \ar@<2pt>[l]^-{s'} \ar[d]|-{\cong} \\
		K \ar@{|>->}[r]_-{\ker(f)} & X \ar@<2pt>[r]^-f & Y, \ar@<2pt>[l]^-s}%
	}}
	\qquad\To\qquad m \text{ is an isomorphism,}
\end{equation}
which is well known to hold in the variety $\Grp$ of groups.

Since the change-of-base functors preserve finitely limits, any $h^*$ is conservative if and only if it is conservative on monomorphisms. More precisely, if $m$ is a monomorphism (in $\Pt_Y(\C)$) and $h^*(m)$ is an isomorphism (in $\Pt_Z(\C)$), then $m$ is an isomorphism (in $\Pt_Y(\C)$). As a consequence, we only need to check the validity of the Split Short Five Lemma when $m$ is a monomorphism.

The category $\Ord\Grp$ of (pre)ordered groups and continuous (=order-preserving) group homomorphisms is not protomodular, even though its object are groups. It was shown in \cite{CM-FM} that the Split Short Five Lemma fails in $\OrdGrp$. Still, $\OrdGrp$ satisfies a weak version of it: given the commutative diagram~\eqref{diag: SS5L in introduction}, then $m$ is an isomorphism provided that it is a strong monomorphism. This led us to study categories satisfying a relative notion of protomodularity, placing a class $\M$ of morphisms where all morphisms were considered in the notion of protomodular category. This gave rise to the notion of category satisfying an \defn{$\M$-version of protomodularity}, which we simply call \defn{$\M$-protomodular}, proposed in this paper (see Definition~\ref{def: M-proto}). This should not be confused with the notion of $\Sscr$-protomodularity introduced by Bourn, Martins-Ferreira, Montoli and Sobral in \cite{BMFMS}, where $\Sscr$ refers to a class of points (or split extensions).

The aim of this work is to investigate $\M$-protomodularity for different classes $\M$ of (mono)mor\-phi\-sms in $\C$, with particular interest in finding examples of categories that are $\M$-protomodular, where $\M$ is not the class of all (mono)morphisms (i.e. $\C$ is not protomodular in the usual sense). Although these categories are not ``truly'' protomodular, they still exhibit some sort of protomodular flavour. Of course, any finitely complete category $\C$ is always $\M$-protomodular when $\M$ is the class of isomorphisms of $\C$. If this is the only class for which it is $\M$-protomodular, then $\C$ is far from being protomodular. If $\C$ is $\M$-protomodular, with $\M$ being the class of all (mono)morphisms, then $\C$ is truly protomodular. This led us to the title of this work.

In Section \ref{sec: M-protomodularity} we translate to our relative context well-known characterisations of protomodular category, showing that their $\M$-versions are equivalent under very mild conditions on $\M$.

Sections \ref{sec: Exs} and \ref{sec: Spl-proto} are dedicated to special properties and examples of $\M$-protomodular categories. As expected, in most of our examples the class of strong monomorphisms plays a crucial role. Still, the more general setting described in Section \ref{sec: Spl-proto}, where $\M$ is the class $\Spl$ of split monomorphisms, turns out to be very interesting; in particular, it is shown that in $\Spl$-protomodular categories pullbacks reflect monomorphisms.

In this context it is natural to introduce an $\M$-version of Mal'tsev categories. That is the goal of Section \ref{sec: M-proto vs M-Mal'tsev}, where it is shown that, as in the classical setting, $\M$-protomodular categories are $\M$-Mal'tsev categories.

In the last section we analyse $\M$-protomodularity from the point of view of $\Ord$-enriched categories. In the examples studied throughout this paper it turns out that the class of full and faithful (ff) monomorphisms is not strong enough to provide relative protomodularity for the $\Ord$-enrichments studied. Hence it is an open question whether there is an interesting $\M$-protomodular $\Ord$-category for $\M$ the class of ff-monomorphisms.

\section{$\M$-protomodularity}\label{sec: M-protomodularity}

In this section we will consider $\C$ to be a finitely complete category, although the existence of pullbacks along split epimorphisms will suffice for most of the results. Given an object $Y$ of $\C$, a \emph{point over $Y$} is given by a pair of morphisms $(f,s)$, where $f\colon X\to Y$ is a split epimorphism with a chosen splitting $s\colon Y\to X$. We denote by $\Pt_Y(\C)$ the category of points over $Y$ in $\C$. Given a morphism $h\colon Z\to Y$, its corresponding \emph{change-of-base functor} $h^*$ is defined as

\[
\begin{array}{rcl}
	h^*\colon \Pt_Y(\C) & \longrightarrow & \Pt_Z(\C), \\
		\xymatrix@R=20pt@C=30pt{X' \ar@<2pt>[dr]^-{f'} \ar[dd]_-{m} \\
			& Y \ar@<2pt>[ul]^-{s'} \ar@<2pt>[dl]^-{s} \\
			X \ar@<2pt>[ur]^-{f}} & \longmapsto &
		\xymatrix@R=20pt@C=30pt{Z\times_Y X' \ar@<2pt>[dr]^-{\pi'_Z} \ar[dd]_-{h^*(m)} \\
			& Z, \ar@<2pt>[ul]^-{i'_Z} \ar@<2pt>[dl]^-{i_Z} \\
			Z\times_Y X \ar@<2pt>[ur]^-{\pi_Z}}
\end{array}
\]
where the front and back faces of the following diagram are pullbacks
\begin{equation}\label{diag: def of h*}
	\vcenter{\xymatrix@R=30pt@C=50pt{Z\times_Y X'  \ar[rr]^-{\pi_{X'}}  \ar@{.>}[dr]^-{h^*(m)} \ar@/_/@<-7pt>[ddr]_-{\pi'_Z} & & X' \ar@/_/@<-2pt>[ddr]_(.25){f'} \ar[dr]^-{m}  \\
		& Z\times_Y X \ar[rr]^(.4){\pi_{X}} \ar@<-2pt>[d]_-{\pi_Z} & & X \ar@<-2pt>[d]_(.3){f} \\
		& Z \ar@<-2pt>[u]_-{i_Z} \ar[rr]_-h \ar@/^/@<3pt>[uul]_-{i'_Z} & & Y; \ar@<-2pt>[u]_(.7){s} \ar@/^/@<-2pt>[uul]_(.65){s'}}}
\end{equation}
$i_Z=\la 1_Z,sh\ra$ and $i'_Z=\la 1_Z,s'h\ra$. Note that a morphism $m\colon (f',s')\to (f,s)$ is an isomorphism (resp. monomorphism) in $\Pt_Y(\C)$ if and only if $m\colon X'\to X$ is an isomorphism (resp. monomorphism) in $\C$.

We will be interested in studying the conservativeness of the change-of-base functors with respect to a certain class $\M$ of morphisms of $\C$. We say that $h^*$ is \emph{conservative on $\M$} when for any $m\colon (f',s')\to (f,s)$ in $\Pt_Y(\C)$ with $m\in\M$, if $h^*(m)$ is an isomorphism, then $m$ itself is an isomorphism. If $\M=\Mono$ is the class of all monomorphisms of $\C$, we say that $h^*$ is conservative on monomorphisms.  It is well-known that the conservativeness of a left exact functor is equivalent to its conservativeness on monomorphisms. This applies to the change-of-base functors. Indeed, it is easy to check that $h^*$ preserves finite limits (this follows from the fact that $\C$ admits pullbacks of split epimorphisms).

In the proofs concerning the conservativeness of $h^*$, we eventually assume that the isomorphism $h^*(m)$ is an identity morphism.

\begin{definition}\label{def: M-proto}
Let $\C$ be a finitely complete category and $\M$ a class of morphisms of $\C$. We say that $\C$ satisfies the \defn{$\M$-version of protomodularity} when, for any morphism $h\colon Z\to Y$, the change-of-base functor $h^*\colon \Pt_Y(\C)\to \Pt_Z(\C)$ is conservative on $\M$.
\end{definition}
For simplicity, we will often say instead that $\C$ is \defn{$\M$-protomodular}. This should not be confused with the well-known notion of $\Sscr$-protomodular category introduced in \cite{BMFMS}, where $\Sscr$ refers to a class of points of $\C$.

\begin{example}\label{ex: M-proto cats}
\begin{itemize}
	\item[1.] 
When $\M$ is the class $\Mor$ of all morphisms or the class $\Mono$ of all monomorphisms in $\C$, $\C$ satisfies the $\M$-version of protomodularity exactly when $\C$ is protomodular \cite{NEKEAC}.
	\item[2.] When $\M$ is the class $\SMono$ of all strong monomorphisms in $\C$, an $\M$-protomodular category is known as a \emph{weakly protomodular category}~\cite{CM-FM}.
\end{itemize}
	\item[3.] For examples concerning other classes $\M$ see Sections~\ref{sec: Exs} and~\ref{sec: Spl-proto}.
\end{example}

We want to explore the several known characterisations of protomodular categories in the context of $\M$-protomodular categories. Mainly, we are interested in the following ones (see~\cite{NEKEAC,BB}):

\begin{theorem} \label{thm: characterization of protomodularity}
Let $\C$ be a finitely complete category. The following conditions are equivalent:
\begin{itemize}
	\item[(i)] $\C$ is protomodular, i.e. the change-of-base functors $h^* \colon \Pt_Y(\C) \to \Pt_Z(\C)$ are conservative for every $h \colon Z \to Y$;
	\item[(ii)] if $\C$ is also pointed, the \emph{Split Short Five Lemma} holds: given a commutative diagram of split short exact sequences
	\begin{equation}\label{diag:iv}
	\vcenter{\xymatrix@=25pt{ K' \ar@{|>->}[r]^-{\ker(f')} \ar[d]|-{\cong} & X' \ar@<2pt>[r]^-{f'} \ar[d]_-m & Y' \ar@<2pt>[l]^-{s'} \ar[d]|-{\cong} \\
		K \ar@{|>->}[r]_-{\ker(f)} & X \ar@<2pt>[r]^-f & Y, \ar@<2pt>[l]^-s}
	}
	\end{equation}
	it follows that $m$ is also an isomorphism;
	\item[(iii)] for any pullback of a point $(f,s)$ along an arbitrary morphism
	\begin{equation}\label{diag:iii}
	\vcenter{\xymatrix@=25pt{ Z\times_Y X \ar@<-2pt>[d]_-{\pi_Z} \ar[r]^-{\pi_X} \ophalfsplitpullback & X \ar@<-2pt>[d]_-f  \\
							   Z \ar@<-2pt>[u]_-{i_Z} \ar[r]_-h & Y, \ar@<-2pt>[u]_-s }}
	\end{equation}
	the pair $(\pi_X,s)$ is jointly extremally epimorphic;
		\item[(iv)] for any commutative diagram involving points $(f,s)$ and $(g,t)$
	\begin{equation}\label{diag:ii}
	\vcenter{
	\xymatrix@R=25pt@C=35pt{ U \ar@<-2pt>[d]_-{g} \ar[r]^-{u} \ar@{}[dr]|-{\caixa{\mathrm{A}}} & X \ar@<-2pt>[d]_-{f} \ar[r]^-{x} \ar@{}[dr]|-{\caixa{\mathrm{B}}} & A 		\ar[d]^-q \\
	V \ar@<-2pt>[u]_-{t} \ar[r]_-v & Y \ar@<-2pt>[u]_-{s} \ar[r]_-y & B}}
	\end{equation}
	(also commuting with the upward sections) where \caixa{A} and \caixa{A}\caixa{B} are pullbacks, then \caixa{B} is also a pullback.
\end{itemize}
\end{theorem}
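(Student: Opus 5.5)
We prove (i)$\Leftrightarrow$(iii), (iii)$\Rightarrow$(iv)$\Rightarrow$(i), and, in the pointed case, (i)$\Leftrightarrow$(ii). Throughout we use the remark preceding Definition~\ref{def: M-proto}: since each $h^*$ preserves finite limits, it suffices to check conservativeness on monomorphisms.

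\emph{(i)$\Rightarrow$(iii).} Let $m\colon M\to X$ be a monomorphism through which both $\pi_X$ and $s$ factor, say $\pi_X=m\pi'$ and $s=m s'$. Put $f'=fm$. Then $f's'=fs=1_Y$, so $m\colon (f',s')\to(f,s)$ is a monomorphism in $\Pt_Y(\C)$. The factorisation of $\pi_X$ through $m$ shows that $h^*(m)$ is a split epimorphism. Being also a monomorphism (as $h^*$ preserves monos), it is an isomorphism. By (i), $m$ is an isomorphism, so $(\pi_X,s)$ is jointly extremally epimorphic.

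\emph{(iii)$\Rightarrow$(i).} Let $m\colon (f',s')\to(f,s)$ be a monomorphism with $h^*(m)$ an isomorphism. Then $\pi_X=m\,\pi_{X'}\,h^*(m)^{-1}$ factors through $m$, and $s=ms'$ does too. By (iii), $m$ is an isomorphism.

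\emph{(i)$\Rightarrow$(iv).} Form the pullback $P=Y\times_B A$ of $q$ along $y$, and let $\varphi\colon X\to P$ be the comparison $\langle f,x\rangle$. The object $P$ is a point over $Y$ via the projection and the section $\langle 1_Y, xs\rangle$, and $\varphi$ is a morphism in $\Pt_Y(\C)$. Since \caixa{A}\caixa{B} is a pullback and pullbacks compose, $v^*(\varphi)$ is the comparison between $U$ and the pullback of $q$ along $yv$, and is therefore an isomorphism. By (i), $\varphi$ is an isomorphism, i.e. \caixa{B} is a pullback. The one point to check here is that the section of $P$ makes $\varphi$ a morphism of points; this holds since $\varphi s=\langle 1_Y,xs\rangle$.

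\emph{(iv)$\Rightarrow$(i).} Given $m\colon(f',s')\to(f,s)$ with $h^*(m)$ an isomorphism, take \caixa{A} to be the pullback of $(f',s')$ along $h$ and \caixa{B} the square $fm=f'$, $m$, $1_Y$. The composite \caixa{A}\caixa{B} is the pullback of $(f,s)$ along $h$, up to the isomorphism $h^*(m)$. By (iv), \caixa{B} is a pullback; a pullback along $1_Y$ has $m$ an isomorphism.

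\emph{(i)$\Leftrightarrow$(ii) (pointed case).} For (i)$\Rightarrow$(ii), first reduce diagram~\eqref{diag:iv} to $Y'=Y$ by composing with the isomorphism $Y'\cong Y$. Then the kernel is $0_Y^*$, and $0_Y^*(m)$ is the left-hand isomorphism, so (i) gives that $m$ is an isomorphism. The converse (ii)$\Rightarrow$(i) is the main obstacle, since (ii) only gives conservativeness of $0_Y^*$. Take $h\colon Z\to Y$ and a monomorphism $m$ with $h^*(m)$ an isomorphism. The plan is to factor $0_Y=h\circ 0_Z$, so that $0_Y^*\cong 0_Z^*h^*$. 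Then $0_Y^*(m)\cong 0_Z^*(h^*(m))$ is an isomorphism, and (ii) applied over $Y$ gives that $m$ is an isomorphism.
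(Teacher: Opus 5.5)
Your proposal is correct and is essentially the standard argument. The paper only cites \cite{NEKEAC,BJ1998,BB} for this theorem, but its proofs of the relative versions (Propositions~\ref{prop: M-proto iff SS5L for M}, \ref{prop: M-proto iff orthogonal to M} and \ref{prop: pb pp}, specialised to $\M=\Mor$ or $\M=\Mono$) proceed exactly as you do, and your identification $0_Y^*\cong 0_Z^*h^*$ is a functorial rephrasing of the paper's step of taking $\ker(\pi_Z)$; the only blemish is that your roadmap announces (iii)$\Rightarrow$(iv) while you actually prove (i)$\Rightarrow$(iv), which suffices since (i)$\Leftrightarrow$(iii) is already established.
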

\begin{proof}
(i) $\Leftrightarrow$ (ii) $\Leftrightarrow$ (iii): see~\cite{NEKEAC, BB}.\\
(i) $\Leftrightarrow$ (iv): see~\cite{NEKEAC, BJ1998, BB}.
\end{proof}

When $\C$ is a pointed category, we say that the Split Sort Five Lemma holds \emph{with respect to $\M$} when the property stated in Theorem~\ref{thm: characterization of protomodularity}(ii) holds for $m\in \M$. Without loss of generality, and to simplify proofs, we shall consider the isomorphisms of the diagram of the Split Short Five Lemma to be the identity morphisms. As a consequence, in diagram~\eqref{diag: def of h*}, we take $Z\times_Y X'=Z\times_Y X$, $\pi'_Z=\pi_Z$ and $i'_Z=i_Z$.

The proofs of the $\M$-versions in the following results are similar to those for ``ordinary'' protomodular  categories.

\begin{proposition}\label{prop: M-proto iff SS5L for M} Let $\C$ be a pointed finitely complete category and $\M$ any class of morphisms of $\C$. The following conditions are equivalent:
\begin{itemize}
	\item[(i)] $\C$ is $\M$-protomodular;
	\item[(ii)] the Split Short Five Lemma holds with respect to $\M$.
\end{itemize}
\end{proposition}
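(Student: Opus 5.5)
The plan is to use the fact that, in the pointed case, the kernel functors $0_Y^*=\ker\colon \Pt_Y(\C)\to\C$ are special change-of-base functors, and that every change-of-base functor $h^*$ can be followed by a kernel functor $0_Z^*$ to give $0_Y^*$. No hypothesis on $\M$ will be needed, since $m$ itself is never replaced by another morphism. As agreed after Theorem~\ref{thm: characterization of protomodularity}, we may take the outer isomorphisms in the Split Short Five Lemma diagram to be identities. Then $m\colon (f',s')\to(f,s)$ is a morphism in $\Pt_Y(\C)$.

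\emph{(i) $\Rightarrow$ (ii).} Take a diagram as in~\eqref{diag:iv} with $m\in\M$ and with the outer vertical morphisms being isomorphisms, say identities on $Y$ and on the kernel object. Then $m$ is a morphism of points over $Y$. The kernel of a split epimorphism $f$ is exactly the pullback of $f$ along $0_Y\colon 0\to Y$, so the left-hand square is the image of $m$ under the change-of-base functor $0_Y^*\colon\Pt_Y(\C)\to\Pt_0(\C)\simeq\C$. The left vertical arrow is an isomorphism, so $0_Y^*(m)$ is an isomorphism. By $\M$-protomodularity $0_Y^*$ is conservative on $\M$, hence $m$ is an isomorphism.

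\emph{(ii) $\Rightarrow$ (i).} Let $h\colon Z\to Y$ be arbitrary, and let $m\colon(f',s')\to(f,s)$ be in $\M$ with $h^*(m)$ an isomorphism. Since $h\circ 0_Z=0_Y$ and pullbacks compose, there is a natural isomorphism $0_Y^*\cong 0_Z^*\circ h^*$. Concretely, pasting the pullback defining $h^*$ with the kernel square of $\pi_Z$ gives the kernel square of $f$, and similarly for $f'$. Hence $0_Y^*(m)\cong 0_Z^*(h^*(m))$, which is an isomorphism because functors preserve isomorphisms. So the induced map $\ker(f')\to\ker(f)$ is an isomorphism. Now $m$, this kernel isomorphism and $1_Y$ form a diagram of shape~\eqref{diag:iv}, and by the Split Short Five Lemma with respect to $\M$, $m$ is an isomorphism.

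The only step needing care is the identification in (ii) $\Rightarrow$ (i), namely the pasting of pullbacks that gives $0_Y^*\cong 0_Z^*h^*$ compatibly with $m$. Here I would check that the kernel map of $h^*(m)$ corresponds, under the canonical isomorphisms of kernels, to the kernel map of $m$. This follows from the uniqueness of induced maps into pullbacks. I would also note that, for this argument, the rows in~\eqref{diag:iv} only need to be a kernel together with a split epimorphism, which is exactly what the points provide.
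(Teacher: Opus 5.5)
Your proof is correct and follows essentially the same route as the paper: (i)$\Rightarrow$(ii) uses the conservativity of $0_Y^*$ on $\M$, and (ii)$\Rightarrow$(i) pastes the kernel of $\pi_Z$ with the pullback defining $h^*$. This identifies $\ker(f')$ and $\ker(f)$, so the Split Short Five Lemma applies to $m$ itself. Your phrasing through $0_Y^*\cong 0_Z^*h^*$ is a functorial repackaging of the paper's normalisation $h^*(m)=1_P$, for which the paper writes $\pi_{X'}\ker(\pi_Z)=\ker(f')$ and $\pi_X\ker(\pi_Z)=\ker(f)$.
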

\begin{proof} For $\M=\Mor$ or $\M=\Mono$ this result is the equivalence between (i) and (ii) of Theorem~\ref{thm: characterization of protomodularity}.\\
(i) $\Rightarrow$ (ii) We consider diagram~\eqref{diag:iv} with $K'=K$ and $Y'=Y$, so that the isomorphisms are the identity morphisms; suppose that $m\in\M$. This diagram shows that $0_Y^*(m)=1_K$ which, by assumption, implies that $m$ is an isomorphism.\\
(ii) $\Rightarrow$ (i) Consider an arbitrary morphism $h\colon Z\to Y$ of $\C$ and a morphism $m\colon X'\to X$ in $\M$ such that $h^*(m)=1_P$
\begin{equation}\label{diag: h*(m) iso}
	\vcenter{\xymatrix@R=30pt@C=50pt{P  \ar[rr]^-{\pi_{X'}}  \ar@{=}[dr]^-{\;h^*(m)=1_P} \ar@/_/@<-7pt>[ddr]_-{\pi_Z} & & X' \ar@/_/@<-2pt>[ddr]_(.25){f'} \ar[dr]^-{\;m\in \M}  \\
		& P \ar[rr]^(.4){\pi_{X}} \ar@<-2pt>[d]_-{\pi_Z} & & X \ar@<-2pt>[d]_(.3){f} \\
		& Z \ar@<-2pt>[u]_-{i_Z} \ar[rr]_-h \ar@/^/@<3pt>[uul]_-{i_Z} & & Y; \ar@<-2pt>[u]_(.7){s} \ar@/^/@<-2pt>[uul]_(.65){s'}}}
\end{equation}
here $P=Z\times_Y X=Z\times_Y X'$. By taking the kernel of $\pi_Z$ we get $\pi_{X'}\ker(\pi_Z)=\ker(f')$ and $\pi_{X}\ker(\pi_Z)=\ker(f)$
\[
\xymatrix@=25pt{ K \ar@{|>->}[r]^-{\ker(f')} \ar@{=}[d] & X' \ar@<2pt>[r]^-{f'} \ar[d]_-{m\in \M} & Y \ar@<2pt>[l]^-{s'} \ar@{=}[d] \\
		K \ar@{|>->}[r]_-{\ker(f)} & X \ar@<2pt>[r]^-f & Y, \ar@<2pt>[l]^-s}
\]
so that we can conclude that $m$ is an isomorphism, by assumption.
\end{proof}

\begin{proposition}\label{prop: M-proto iff orthogonal to M} Let $\C$ be a finitely complete category and $\M$ a class of monomorphisms of $\C$. The following conditions are equivalent:
\begin{itemize}
	\item[(i)] $\C$ is $\M$-protomodular;
	\item[(ii)] for any pullback of a point $(f,s)$ along an arbitrary morphism as in \eqref{diag:iii},	the pair $(\pi_X,s)$ is orthogonal to the class $\M$.
\end{itemize}
\end{proposition}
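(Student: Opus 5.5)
The plan is to read condition (ii) as a diagonal-fill property and to prove the two implications separately. Concretely, $(\pi_X,s)$ orthogonal to $\M$ means the following. For every $m\colon A\to B$ in $\M$ and all morphisms $a\colon Z\times_Y X\to A$, $b\colon Y\to A$, $x\colon X\to B$ with $x\pi_X=ma$ and $xs=mb$, there is a unique $d\colon X\to A$ with $md=x$, $d\pi_X=a$ and $ds=b$. Since $m$ is a monomorphism, uniqueness of $d$ is automatic. Moreover, $md=x$ alone forces $d\pi_X=a$ and $ds=b$, because $m d\pi_X=x\pi_X=ma$ and $m ds=xs=mb$. So the only real content is the \emph{existence} of a factorisation of $x$ through $m$.

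\emph{(ii) $\Rightarrow$ (i).} Take $h\colon Z\to Y$ and $m\colon(f',s')\to(f,s)$ in $\Pt_Y(\C)$ with $m\in\M$ and $h^*(m)$ an isomorphism. As agreed after Theorem~\ref{thm: characterization of protomodularity}, we may assume $h^*(m)=1_P$ as in diagram~\eqref{diag: h*(m) iso}. Then $\pi_X=m\pi_{X'}$ and $s=ms'$. Apply orthogonality of $(\pi_X,s)$ to $m$ with $x=1_X$, $a=\pi_{X'}$ and $b=s'$. This yields $d\colon X\to X'$ with $md=1_X$. So $m$ is a split epimorphism and a monomorphism, hence an isomorphism.

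\emph{(i) $\Rightarrow$ (ii).} Take data $m\colon A\to B$ in $\M$ and $a,b,x$ as above. The idea is to replace $m$ by a monomorphism of points over $Y$.
\begin{itemize}
\item Form the pullback $X'=X\times_B A$ of $m$ along $x$, with projections $m'\colon X'\to X$ (a monomorphism) and $x'\colon X'\to A$.
\item Put $f'=fm'$ and $s'=\langle s,b\rangle\colon Y\to X'$. The pair $s'$ is well defined since $xs=mb$, and $f's'=fs=1_Y$, so $m'\colon(f',s')\to(f,s)$ is a morphism in $\Pt_Y(\C)$.
\item Compute $h^*(m')$. The pullback of $f'$ along $h$ is $Z\times_Y X'\cong (Z\times_Y X)\times_X X'$. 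The morphism $\langle\pi_X,a\rangle\colon Z\times_Y X\to X'$, well defined since $x\pi_X=ma$, provides a section of the projection $(Z\times_Y X)\times_X X'\to Z\times_Y X$.
\item That projection is the pullback of the monomorphism $m'$, hence a monomorphism. A split epic monomorphism is invertible, so $h^*(m')$ is an isomorphism.
\end{itemize}
By (i), $m'$ is an isomorphism, and $d=x'(m')^{-1}$ satisfies $md=x$, as required.

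The main obstacle is the step where (i) is applied to $m'$: this needs $m'\in\M$, i.e.\ that $\M$ is stable under pullback along arbitrary morphisms. This holds for $\Mono$, $\SMono$ and similar classes, but not for an arbitrary class of monomorphisms; for instance, split monomorphisms are not stable under pullback in general. So before writing this up I would check the intended reading of the statement. If the paper's notion of orthogonality only tests those $m\in\M$ that arise as morphisms of points under the relevant section, the pullback step disappears and one works with $m$ directly. Otherwise I would add the hypothesis that $\M$ is pullback-stable to make the argument above go through.
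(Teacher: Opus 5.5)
Your argument is correct, and once your hedge is resolved the way the paper intends, it is the paper's proof. The paper reads ``$(\pi_X,s)$ orthogonal to $\M$'' in the restricted sense, spelled out for $(u,\beta)$ in Section~\ref{sec: Spl-proto}: whenever $(\pi_X,s)$ factors through some $m\in\M$ with codomain $X$, that $m$ is an isomorphism. This is your case $x=1_X$, where the pullback step is trivial ($m'=m$, $f'=fm$, $s'=b$) and no pullback-stability of $\M$ is needed. Both directions then coincide with the paper's. In particular, for (i)$\Rightarrow$(ii) the paper simply notes that the square with sides $a$, $1_{Z\times_Y X}$, $m$, $\pi_X$ is a pullback because $m$ is monic, so $h^*(m)$ is an identity. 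Your observation that the full diagonal-fill reading would need pullbacks of $m$ to stay in $\M$ (which fails e.g.\ for $\Spl$) is a fair one, and is exactly why the weaker notion is the one used.
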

\begin{proof} For $\M=\Mono$ this result is the equivalence between (i) and (iii) of Theorem~\ref{thm: characterization of protomodularity}.\\
(i) $\Rightarrow$ (ii) To prove that $(\pi_X,s)$ is orthogonal to $\M$, we consider the commutative diagram
\[
	\xymatrix@=25pt{ & M \ar@{ >->}[d]|-{m\in \M} \\
		Z\times_Y X \ar[ur]^-a \ar[r]_-{\pi_X} & X & Y. \ar[ul]_-b \ar[l]^-s}
\]
This gives the commutative diagram
\[
	\xymatrix@R=30pt@C=50pt{Z\times_Y X  \ar[rr]^-{a}  \ar@{=}[dr]^-{\;h^*(m)=1_{Z\times_Y X}} \ar@/_/@<-7pt>[ddr]_-{\pi_Z} & & M \ar@/_/@<-2pt>[ddr]_(.25){fm} \ar@{ >->}[dr]^-{\;m\in \M}  \\
		& Z\times_Y X  \ar[rr]^(.4){\pi_{X}} \ar@<-2pt>[d]_-{\pi_Z} & & X \ar@<-2pt>[d]_(.3){f} \\
		& Z \ar@<-2pt>[u]_-{i_Z} \ar[rr]_-h \ar@/^/@<3pt>[uul]_-{i_Z} & & Y, \ar@<-2pt>[u]_(.7){s} \ar@/^/@<-2pt>[uul]_(.65){b}}
\]
where the front and top faces are pullbacks (the top face is a pullback since $m$ is a monomorphism). Note that $mai_Z =\pi_Xi_Z=sh=mbh$ implies that $a i_Z=bh$, since $m$ is a monomorphism. This proves that the back face commutes with respect to the sections. By assumption we may conclude that $m$ is an isomorphism.\\
(ii) $\Rightarrow$ (i) Consider diagram~\eqref{diag: h*(m) iso} with $m$ a monomorphism in $\M$. We get
\[
	\xymatrix@=25pt{ & X' \ar@{ >->}[d]|-{m\in \M} \\
		P \ar[ur]^-{\pi_X'} \ar[r]_-{\pi_X} & X & Y; \ar[ul]_-{s'} \ar[l]^-s}
\]
by assumption, $m$ is an isomorphism.
\end{proof}

\begin{proposition}\label{prop: pb pp} Let $\C$ be a finitely complete category and $\M$ a class of morphisms of $\C$ which is left-cancellative, that is, $ba\in \M \Rightarrow a\in \M$. The following conditions are equivalent:
\begin{itemize}
	\item[(i)] $\C$ is $\M$-protomodular;
	\item[(ii)] for any commutative diagram involving points $(f,s)$ and $(g,t)$
	\[\xymatrix@R=25pt@C=35pt{ U \ar@<-2pt>[d]_-{g} \ar[r]^-{u} \ar@{}[dr]|-{\caixa{\mathrm{A}}} & X \ar@<-2pt>[d]_-{f} \ar[r]^-{m\in \M} \ar@{}[dr]|-{\caixa{\mathrm{B}}} & A 		\ar[d]^-q \\
	V \ar@<-2pt>[u]_-{t} \ar[r]_-v & Y \ar@<-2pt>[u]_-{s} \ar[r]_-y & B}
	\]
	(also commuting with the upward sections) where \caixa{A} and \caixa{A}\caixa{B} are pullbacks, then \caixa{B} is also a pullback.
\end{itemize}
\end{proposition}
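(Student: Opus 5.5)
(i) $\Rightarrow$ (ii). Suppose the squares \caixa{A} and \caixa{A}\caixa{B} are pullbacks and the horizontal map of \caixa{B} is $m\in\M$. The point $q$ need not be split, so I will pull $q$ back along $y$ to get the pullback $P=Y\times_B A$. Since $y f s=qms$, the morphism $ms$ induces a section $\sigma=\la 1_Y, ms\ra$ of $\pi_Y\colon P\to Y$, so $(\pi_Y,\sigma)$ is a point over $Y$. The comparison morphism $\varphi=\la f,m\ra\colon X\to P$ satisfies $\pi_A\varphi=m$ and is a morphism of points $(f,s)\to(\pi_Y,\sigma)$, because $\varphi s=\la 1_Y,ms\ra=\sigma$. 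Since $\pi_A\varphi=m\in\M$, left-cancellativity gives $\varphi\in\M$.

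Next I apply $v^*$. The square \caixa{A} is a pullback, so $v^*(f,s)\cong(g,t)$. By pasting pullbacks, the pullback of $\pi_Y$ along $v$ is the pullback of $q$ along $yv$, and this is $U$ because \caixa{A}\caixa{B} is a pullback. Hence $v^*(\pi_Y,\sigma)\cong(g,\cdot)$ with the induced section. I then check that, under these identifications, $v^*(\varphi)$ is the identity of $U$, or at least an isomorphism: it is the unique map $U\to V\times_Y P$ with components $g$ and $\pi_A\varphi u=mu$, and that is exactly the pullback comparison for \caixa{A}\caixa{B}. I also need to check that the sections match. On $U$ the induced section is $\la 1_V,mut\ra$ in $V\times_B A$, which is $t$ because $mut=msv$. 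Conservativeness of $v^*$ on $\M$ then makes $\varphi$ an isomorphism, so \caixa{B} is a pullback.

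(ii) $\Rightarrow$ (i). Take $h\colon Z\to Y$ and $m\colon(f',s')\to(f,s)$ in $\M$ with $h^*(m)=1_P$, as in diagram~\eqref{diag: h*(m) iso}. I consider the diagram whose left square is the back pullback $P\to X'$ over $h$, with point $(\pi_Z,i_Z)$ and point $(f',s')$, and whose right square is $m\colon X'\to X$ over $1_Y$, with $q=f$. The left square \caixa{A} is a pullback. The composite \caixa{A}\caixa{B} is $\pi_X=m\pi_{X'}$ over $h$, which is the front pullback. The sections commute. By (ii), \caixa{B} is a pullback, and a square with identity at the bottom is a pullback exactly when its top arrow is an isomorphism. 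So $m$ is an isomorphism.

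The main obstacle is the bookkeeping in (i) $\Rightarrow$ (ii). One has to ensure that $v^*(\varphi)$ really is an isomorphism in $\Pt_V(\C)$, which needs the section compatibility, and to place $\varphi$ in $\M$. Left-cancellativity is used precisely for the latter.
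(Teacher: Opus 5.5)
Your proof is correct and follows the paper's argument: your $\varphi=\la f,m\ra$ is the paper's comparison morphism $z$ into the pullback of $(y,q)$, placed in $\M$ by left-cancellativity; like you, the paper shows $v^*(z)$ is an identity by pasting pullbacks (its square \caixa{D}), and your converse direction is the paper's. The section compatibility you flag is harmless but not actually needed, since $v^*(\varphi)$ is automatically a morphism of points and a morphism of points is an isomorphism as soon as it is one in $\C$.
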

\begin{proof} For $\M=\Mor$ or $\M=\Mono$ this result is the equivalence between (i) and (iv) of Theorem~\ref{thm: characterization of protomodularity}.\\
(i) $\Rightarrow$ (ii) Take the pullback of $(y,q)$ and the induced morphism $z$ in
\[
\xymatrix@R=25pt@C=35pt{X \ar@(r,u)[drr]^-{m\in \M} \ar@{.>}[dr]^-z \ar@(d,l)@<-2pt>[ddr]_-f  \\
	& Z \ar@{}[dr]|-{\caixa{\mathrm{C}}} \ar@<-2pt>[d]_-{\pi_Y} \ar[r]^-{\pi_A} \ophalfsplitpullback & A \ar[d]^-q \\
	& Y \ar@<-2pt>@(l,d)[uul]_-s \ar@<-2pt>[u]_-{zs} \ar[r]_-y & B.}
\]
Note that $z\in \M$, because $\M$ is left-cancellative. We get
\[
\xymatrix@R=25pt@C=35pt{U \ar[r]^-{zu} \ar@<-2pt>[d]_-g \ar@{}[dr]|-{\caixa{\mathrm{D}}} & Z \ar@{}[dr]|-{\caixa{\mathrm{C}}} \ophalfsplitpullback \ar@<-2pt>[d]_-{\pi_Y} \ar[r]^-{\pi_A} & A \ar[d]^-q \\
	V \ar@<-2pt>[u]_-t \ar[r]_-v & Y \ar@<-2pt>[u]_-{zs} \ar[r]_-y & B,}
\]
where \caixa{D}\caixa{C}=\caixa{A}\caixa{B} and \caixa{C} are pullbacks, so that \caixa{D} is a pullback. This gives the following commutative diagram
\[
	\xymatrix@R=30pt@C=50pt{U  \ar[rr]^-{u}  \ar@{=}[dr]^-{\;v^*(z)=1_U} \ar@/_/@<-7pt>[ddr]_-{g} & & X \ar@/_/@<-2pt>[ddr]_(.25){f} \ar[dr]^-{\;z\in \M}  \\
		& U \ophalfsplitpullback \ar[rr]^(.4){zu} \ar@<-2pt>[d]_-{g} & & Z \ar@<-2pt>[d]_(.3){\pi_Y} \\
		& V \ar@<-2pt>[u]_-{t} \ar[rr]_-v \ar@/^/@<3pt>[uul]_-{t} & & Y, \ar@<-2pt>[u]_(.7){zs} \ar@/^/@<-2pt>[uul]_(.65){s}}
\]
from which we deduce that $z$ is an isomorphism; consequently \caixa{B} is a pullback.\\
(ii) $\Rightarrow$ (i) Consider diagram~\eqref{diag: h*(m) iso}. We get
\[
	\xymatrix@R=25pt@C=35pt{ P \ar@(u,u)[rr]^-{\pi_X} \ar@<-2pt>[d]_-{\pi_Z} \ar[r]^-{\pi_{X'}} \ar@{}[dr]|-{\caixa{\mathrm{A}}} & X' \ar@<-2pt>[d]_-{f'} \ar[r]^-{m\in \M} \ar@{}[dr]|-{\caixa{\mathrm{B}}} & X \ar[d]^-f \\
	Z \ar@(d,d)[rr]_-{h} \ar@<-2pt>[u]_-{i_Z} \ar[r]_-h & Y \ar@<-2pt>[u]_-{s'} \ar@{=}[r]_-{1_Y} & Y,}
\]
where \caixa{A} and \caixa{A}\caixa{B} are pullbacks, so that \caixa{B} is also a pullback; thus $m$ is an isomorphism.
\end{proof}

\begin{proposition}\label{prop: pb pp for regular cat} Let $\C$ be a regular category and $\M$ a class of morphisms of $\C$ with the properties
\begin{itemize}
	\item[(P1)] $\M$ is left-cancellative;
	\item[(P2)] $\M$ is closed under composition;
	\item[(P3)] $m\in \M \Rightarrow m\times m \in \M$;
	\item[(P4)] for any pullback projections $\pi_Z$, $\pi_X$, the monomorphism $\la \pi_Z,\pi_X\ra\colon Z\times_Y X\mono Z\times X$ is in $\M$.
\end{itemize}
The following conditions are equivalent:
\begin{itemize}
	\item[(i)] $\C$ is $\M$-protomodular;
	\item[(ii)] for any commutative diagram
	\[\xymatrix@R=25pt@C=35pt{ U \ar@{>>}[d]_-{g} \ar[r]^-{u} \ar@{}[dr]|-{\caixa{\mathrm{A}}} & X \ar@{>>}[d]_-{f} \ar[r]^-{m\in \M} \ar@{}[dr]|-{\caixa{\mathrm{B}}} & A 		\ar[d]^-q \\
	V \ar[r]_-v & Y \ar[r]_-y & B}
	\]
	where $f,g$ are regular epimorphisms, if \caixa{A} and \caixa{A}\caixa{B} are pullbacks, then \caixa{B} is also a pullback.
\end{itemize}
\end{proposition}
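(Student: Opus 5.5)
The implication (ii) $\Rightarrow$ (i) should be immediate from Proposition~\ref{prop: pb pp}. Points are split epimorphisms, hence regular epimorphisms. So condition (ii) here, restricted to diagrams in which $f,g$ carry compatible sections, contains condition (ii) of Proposition~\ref{prop: pb pp}. Since $\M$ is left-cancellative by (P1), that proposition gives $\M$-protomodularity. Only (P1) is needed in this direction.

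For (i) $\Rightarrow$ (ii) I would reduce to points by passing to kernel pairs, in the style of the classical proof. Given the diagram with $f,g$ regular epimorphisms, \caixa{A} and \caixa{A}\caixa{B} pullbacks and $m\in\M$, I build a three-square diagram of points:
\[
\xymatrix@R=25pt@C=35pt{ \Eq(g) \ar@<-2pt>[d]_-{g_1} \ar[r]^-{\bar u} & \Eq(f) \ar@<-2pt>[d]_-{f_1} \ar[r]^-{\bar m} & A\times_B A \ar[d]^-{q_1} \\
U \ar@<-2pt>[u]_-{\delta_U} \ar[r]_-u & X \ar@<-2pt>[u]_-{\delta_X} \ar[r]_-m & A.}
\]
Here the vertical maps are first projections split by the diagonals, and $\bar u,\bar m$ are the induced maps.

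The hypotheses of Proposition~\ref{prop: pb pp}(ii) are checked as follows.
\begin{itemize}
\item[(a)] The left square is a pullback. Indeed $U\times_V U\cong U\times_X(X\times_Y X)$ because \caixa{A} is a pullback.
\item[(b)] The outer rectangle is a pullback. Since \caixa{A}\caixa{B} is a pullback, $U\cong V\times_B A$, so $\Eq(g)\cong V\times_B A\times_B A\cong U\times_A(A\times_B A)$.
\item[(c)] The sections are compatible, which is clear.
\item[(d)] $\bar m\in\M$. Its composite with the monomorphism $A\times_B A\rightarrowtail A\times A$ equals $(m\times m)\circ\la p_1,p_2\ra$, where $\la p_1,p_2\ra\colon \Eq(f)\rightarrowtail X\times X$ is in $\M$ by (P4). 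Also $m\times m\in\M$ by (P3), so the composite is in $\M$ by (P2). Then $\bar m\in\M$ by (P1).
\end{itemize}
Proposition~\ref{prop: pb pp}, direction (i) $\Rightarrow$ (ii), then says the right square is a pullback. Concretely, $\Eq(f)\cong X\times_A(A\times_B A)$, which means the kernel pair of $f$ coincides with the kernel pair of $qm$.

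The last step deduces that \caixa{B} itself is a pullback, and I expect it to be the main obstacle. Let $P=Y\times_B A$ and let $w=\la f,m\ra\colon X\to P$ be the comparison map. I would argue as follows.
\begin{itemize}
\item[(1)] $w$ is a monomorphism. From $\Eq(f)=\Eq(qm)$, if $w x=w x'$ then $f x=f x'$. Since $m$ is also recorded in $w$, this gives $\Eq(w)=\Eq(f)\cap\Eq(m)$. That intersection is the diagonal, because $\Eq(f)\subseteq\Eq(qm)$ is realised inside $A\times_B A$ through $\bar m$.
\item[(2)] $w$ is a regular epimorphism. Here I would use regularity: $f=\pi_Y w$ is a regular epimorphism, and the equality of kernel pairs $\Eq(f)=w^{-1}(\Eq(\pi_Y))$, which is what the right square being a pullback expresses, lets a Barr--Kock type argument show that $w$ is a pullback of a regular epimorphism.
\end{itemize}
If (2) resists, I would instead apply $\M$-protomodularity directly to $w$ viewed as a morphism of points after pulling back along $f$. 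This uses the splitting of $f\times_Y f$ by the diagonal.
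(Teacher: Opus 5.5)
Your (ii)$\Rightarrow$(i) and the first half of (i)$\Rightarrow$(ii) follow the paper's argument exactly. You pass to kernel pairs, get $\bar m\in\M$ from (P4), (P3), (P2) and (P1), and apply Proposition~\ref{prop: pb pp} to conclude that the square \caixa{D} formed by $\bar m$ over $m$ is a pullback. The paper then finishes in one line with the Barr--Kock theorem. Your hand-made replacement for that theorem is where the argument breaks.

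In step (2), the ``equality of kernel pairs'' $\Eq(f)=w^{-1}(\Eq(\pi_Y))$ holds for every commutative square, simply because $f=\pi_Y w$. It carries none of the information that \caixa{D} is a pullback, so nothing follows from it about $w$ being an epimorphism. Likewise, \caixa{D} being a pullback does not mean $\Eq(f)=\Eq(qm)$. In $\Set$, take $X=Y$ with two elements, $A=B=1$ and $f=1_Y$: then \caixa{B} and \caixa{D} are pullbacks, yet $\Eq(qm)=Y\times Y\neq\Eq(f)$; note that $\M$ need not consist of monomorphisms here.

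What \caixa{D} being a pullback actually says is that the comparison map $\Eq(f)\to X\times_A\Eq(q)\cong X\times_B A$, $(x,x')\mapsto(x,mx')$, is invertible. Its injectivity is the correct justification of your step (1): a pair in $\Eq(f)\cap\Eq(m)$ has the same image as $(x,x)$. Its surjectivity is precisely what is needed for $w$ to be epic, and your step (2) never uses it. The fallback does not work either. Pulling $w$ back along $f$ produces exactly this comparison map, which is already an isomorphism, so there is nothing left for $\M$-protomodularity to do. Moreover $w$ itself is not a morphism of points, since $f$ and $\pi_Y$ are not split.

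What is missing is descent along the regular epimorphism $f$, i.e.\ the regularity of $\C$. Let $P=Y\times_B A$ and let $p\colon X\times_Y P\to P$ be the projection. The pullback of $w$ along $p$ is $1\times_Y w\colon X\times_Y X\to X\times_Y P\cong X\times_B A$. This is the comparison map of \caixa{D}, hence an isomorphism. Since $p$ is a pullback of $f$, it is a regular epimorphism. So $w p_2=p\,(1\times_Y w)$ is a regular epimorphism, and therefore $w$ is a strong epimorphism. Combined with (1), $w$ is an isomorphism, i.e.\ \caixa{B} is a pullback. This is just the proof of the Barr--Kock theorem, which you could equally cite directly, as the paper does.
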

\begin{proof} For $\M=\Mor$ or $\M=\Mono$ the proof of the equivalence between (i) and (ii) can be found in~\cite{NEKEAC}.\\
(i) $\Rightarrow$ (ii) Take vertical kernel pairs as in
	\[
	\xymatrix@R=30pt@C=35pt{ \Eq(g) \ar@{}[dr]|-{\caixa{\mathrm{C}}} \ar[r]^-{\overline{u}} \ar@<-4pt>[d]_-{g_1} \ar@<4pt>[d]^-{g_2} & \Eq(f) \ar@{}[dr]|-{\caixa{\mathrm{D}}} \ar[r]^-{\overline{m}} \ar@<-4pt>[d]_-{f_1} \ar@<4pt>[d]^-{f_2} & \Eq(q) \ar@<-4pt>[d]_-{q_1} \ar@<4pt>[d]^-{q_2}\\
	U \ar[u]  \ar@{>>}[d]_-{g} \ar[r]^-{u} \ar@{}[dr]|-{\caixa{\mathrm{A}}} & X \ar[u] \ar@{>>}[d]_-{f} \ar[r]^-{m\in \M} \ar@{}[dr]|-{\caixa{\mathrm{B}}} & A \ar[u]		\ar[d]^-q \\
	V \ar[r]_-v & Y \ar[r]_-y & B.}
	\]
Since \caixa{A} and \caixa{A}\caixa{B} are pullbacks, then so are \caixa{C} and \caixa{C}\caixa{D}. From properties (P2), (P3) and (P4), $(m\times m) \la f_1,f_2\ra=\la q_1,q_2\ra \overline{m}\in \M$; thus $\overline{m}\in \M$ by property (P1). We can apply Proposition~\ref{prop: pb pp} to \caixa{C}\caixa{D}, to conclude that \caixa{D} is a pullback. By the Barr-Kock Theorem~\cite{Barr} we conclude that \caixa{B} is a pullback as well.\\
(ii) $\Rightarrow$ (i) Follows immediately from Proposition~\ref{prop: pb pp}.
\end{proof}

\begin{proposition}\label{prop: regular M-proto iff S5L for M} Let $\C$ be a pointed regular category and $\M$ a class of morphisms of $\C$ with the properties
\begin{itemize}
	\item[(P1)] $\M$ is left-cancellative;
	\item[(P2)] $\M$ is closed under composition;
	\item[(P3)] $m\in \M \Rightarrow m\times m \in \M$;
	\item[(P4)] for any pullback projections $\pi_Z$, $\pi_X$, the monomorphism $\la \pi_Z,\pi_X\ra\colon Z\times_Y X\mono Z\times X$ is in $\M$.
\end{itemize} The following conditions are equivalent:
\begin{itemize}
	\item[(i)] $\C$ is $\M$-protomodular;
	\item[(ii)] the Short Five Lemma holds with respect to $\M$, i.e. given a commutative diagram of short exact sequences,
	\[\xymatrix@=25pt{ K \ar@{|>->}[r]^-{\ker(f')} \ar@{=}[d] & X' \ar@{|>}[r]^-{f'} \ar[d]^-{m\in \M} & Y \ar@{=}[d] \\
		K \ar@{|>->}[r]_-{\ker(f)} & X \ar@{|>}[r]_-f & Y,}\]
		$m$ is necessarily an isomorphism.
\end{itemize}
\end{proposition}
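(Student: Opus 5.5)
The plan is to prove (ii) $\Rightarrow$ (i) by reducing to the Split Short Five Lemma (Proposition~\ref{prop: M-proto iff SS5L for M}), and (i) $\Rightarrow$ (ii) through the regular-epimorphism pullback criterion of Proposition~\ref{prop: pb pp for regular cat}. For $\M=\Mor$ or $\M=\Mono$ the equivalence is the classical one, and (P1)--(P4) are used only in (i) $\Rightarrow$ (ii).

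\textbf{(ii) $\Rightarrow$ (i).} In a pointed category a point $(f,s)$ with $f\colon X\to Y$ gives a short exact sequence $\ker(f)$, $f$. Indeed, a split epimorphism is a regular epimorphism, and in a regular category every regular epimorphism is the cokernel of its kernel. Hence a diagram of split short exact sequences as in Theorem~\ref{thm: characterization of protomodularity}(ii), with identities at the ends and $m\in\M$, is in particular a diagram of short exact sequences. Condition (ii) then gives that $m$ is an isomorphism. So the Split Short Five Lemma holds with respect to $\M$, and Proposition~\ref{prop: M-proto iff SS5L for M} yields (i).

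\textbf{(i) $\Rightarrow$ (ii).} Start from the diagram of short exact sequences with $m\in\M$ and $fm=f'$. I intend to apply Proposition~\ref{prop: pb pp for regular cat}(ii) to the diagram
\[
\xymatrix@R=25pt@C=35pt{ K \ar@{>>}[d] \ar[r]^-{\ker(f')} & X' \ar@{>>}[d]_-{f'} \ar[r]^-{m} & X \ar[d]^-f \\
0 \ar[r] & Y \ar@{=}[r] & Y.}
\]
Here $K\to 0$ is a regular epimorphism, since it is split by $0\to K$ in a pointed category, and $f'$ is a regular epimorphism because it is a cokernel. The left square is a pullback because $\ker(f')$ is the kernel of $f'$. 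The outer rectangle is a pullback because $m\ker(f')=\ker(f)$ is the kernel of $f$. Proposition~\ref{prop: pb pp for regular cat} then says the right square is a pullback. A pullback of $f$ along $1_Y$ forces $m$ to be an isomorphism.

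I expect the main obstacle to be checking that the hypotheses of Proposition~\ref{prop: pb pp for regular cat} are exactly met. Its statement places no section condition on $g,f$, only that they are regular epimorphisms, so the step should go through. It also needs the fact that in a regular category a regular epimorphism is the cokernel of its kernel only when this is true; short exact sequences are already given as cokernels, so this is harmless. If instead one wanted to avoid Proposition~\ref{prop: pb pp for regular cat}, the fallback is the argument in its own proof. Take the kernel pairs $\Eq(f')\to\Eq(f)$, whose induced map lies in $\M$ by (P1)--(P4). These are points over $X'$ and $X$ with the diagonal as section. Then apply Proposition~\ref{prop: pb pp} and the Barr--Kock theorem.
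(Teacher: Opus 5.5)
Your route is the paper's own. For (i) $\Rightarrow$ (ii) you apply Proposition~\ref{prop: pb pp for regular cat} to the diagram with regular epimorphisms $K\to 0$ and $f'$. For (ii) $\Rightarrow$ (i) you reduce to the Split Short Five Lemma and invoke Proposition~\ref{prop: M-proto iff SS5L for M}, exactly as the paper does in one line. The step that needs fixing is the justification you give in (ii) $\Rightarrow$ (i). It is false that in a pointed regular category every regular (or even split) epimorphism is the cokernel of its kernel. Take pointed sets, which form a variety and hence a regular category. There the split epimorphism $f\colon\{*,a,b\}\to\{*,c\}$ has zero kernel, so $\coker(\ker f)$ is an identity, yet $f$ is not an isomorphism.

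What makes the step work is reading ``short exact sequence'' in (ii) as a kernel followed by a regular epimorphism. That is also all your (i) $\Rightarrow$ (ii) argument uses. Split epimorphisms are regular, so a split extension is then literally an instance of (ii), and no further argument is needed. The paper's one-line reduction tacitly uses this same reading. The reading is forced, not just convenient: if (ii) is asserted only for sequences with $f=\coker(\ker f)$, then (ii) $\Rightarrow$ (i) is false. Pointed sets with $\M=\Mor$ satisfy that version of the Short Five Lemma. A normal epimorphism there is injective off its kernel, so any comparison $m$ is bijective. But pointed sets are not protomodular. The inclusion $\{*,a\}\hookrightarrow\{*,a,b\}$ over $\{*,c\}$, with both sections picking $a$, violates the Split Short Five Lemma. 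So drop the false general claim and state the regular-epimorphism reading explicitly.
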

\begin{proof} For $\M=\Mor$ or $\M=\Mono$ this result is the equivalence between (i) and (iv) can be found in~\cite{NEKEAC, BJ1998, BB}.\\
(i) $\Rightarrow$ (ii) We get the following commutative diagram
\[
	\xymatrix@R=25pt@C=35pt{
		K \ar@{|>}[d] \ar@{|>->}[r]^-{\ker(f')} \ar@{}[dr]|-{\caixa{A}} \ar@(u,u)[rr]^-{\ker(f)} & X' \ar[r]^-{m\in \M} \ar@{|>}[d]^-{f'} \ar@{}[dr]|-{\caixa{B}} & X \ar@{|>}[d]^-f \\
		0 \ar@{|>->}[r] & Y \ar@{=}[r] & Y,}
\]
where \caixa{A} and \caixa{A}\caixa{B} are pullbacks. By Proposition~\ref{prop: pb pp for regular cat} we conclude that \caixa{B} is a pullback, thus $m$ is an isomorphism.\\
(ii) $\Rightarrow$ (i) Follows from Proposition~\ref{prop: M-proto iff SS5L for M}.
\end{proof}

\begin{remark} In the following sections we are mainly interested in studying $\M$-protomodularity for the classes of monomorphisms $\M=\SMono$, $\M=\Spl$ the class of split monomorphisms and $\M=\PSpl$ the class of protosplit monomorphisms (=kernels of split epimorphisms). $\SMono$ has the properties (P1)-(P4), so any of the above equivalences hold for $\SMono$-protomodular categories, as long as $\C$ satisfies their initial assumptions. $\Spl$ has properties (P1)-(P3) but not (P4), so only the equivalences in Propositions~\ref{prop: M-proto iff SS5L for M}, \ref{prop: M-proto iff orthogonal to M} and \ref{prop: pb pp} hold for $\Spl$-protomodular (pointed) categories. $\PSpl$ only satisfies (P2); thus only the equivalences in Propositions~\ref{prop: M-proto iff SS5L for M} and \ref{prop: M-proto iff orthogonal to M} hold for $\Spl$-protomodular pointed categories.
\end{remark}

\section{Examples of $\M$-protomodular categories}\label{sec: Exs}

We shall use the following result to obtain several interesting examples of $\M$-protomodular categories.

\begin{theorem}\label{thm: F:CC->DD} Consider finitely complete categories $\C$ and $\D$, with classes of morphisms $\M$ and $\N$, respectively. Let $F\colon \C\to \D$ be a  functor which preserves pullbacks, is conservative on $\M$ and such that $F(\M)\subseteq\N$. If $\D$ is an $\N$-protomodular category, then $\C$ is an $\M$-protomodular category.
\end{theorem}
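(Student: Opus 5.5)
The plan is to verify Definition~\ref{def: M-proto} directly. Fix a morphism $h\colon Z\to Y$ in $\C$ and a morphism $m\colon (f',s')\to(f,s)$ in $\Pt_Y(\C)$ with $m\in\M$ and $h^*(m)$ an isomorphism. We must show that $m$ is an isomorphism. We will transport the whole diagram~\eqref{diag: def of h*} along $F$ and use the $\N$-protomodularity of $\D$.

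\textbf{Step 1 (transport to $\D$).} Apply $F$ to diagram~\eqref{diag: def of h*}. Since $F$ is a functor, $(Ff,Fs)$ and $(Ff',Fs')$ are points over $FY$ in $\D$. The morphism $Fm$ is a morphism of points, because $Ff\cdot Fm=Ff'$ and $Fm\cdot Fs'=Fs$. Likewise $(F\pi_Z,Fi_Z)$ and $(F\pi'_Z,Fi'_Z)$ are points over $FZ$. Since $F$ preserves pullbacks, the front and back faces remain pullbacks in $\D$. Hence we may identify $F(Z\times_Y X)\cong FZ\times_{FY}FX$, and similarly for $X'$, compatibly with the projections and with the sections $F\la 1_Z,sh\ra=\la 1_{FZ},Fs\,Fh\ra$. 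Under these identifications, $F(h^*(m))$ is the unique comparison morphism between the two pullbacks. By the universal property, this comparison is exactly $(Fh)^*(Fm)$, up to the canonical isomorphisms.

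\textbf{Step 2 (use the hypothesis on $\D$).} Because $h^*(m)$ is an isomorphism and functors preserve isomorphisms, $(Fh)^*(Fm)$ is an isomorphism in $\Pt_{FZ}(\D)$. Since $F(\M)\subseteq\N$, we have $Fm\in\N$. As $\D$ is $\N$-protomodular, $(Fh)^*$ is conservative on $\N$, so $Fm$ is an isomorphism in $\Pt_{FY}(\D)$, hence in $\D$.

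\textbf{Step 3 (reflect back).} Since $m\in\M$ and $F$ is conservative on $\M$, the fact that $Fm$ is an isomorphism gives that $m$ is an isomorphism in $\C$. As noted in the paper, this means $m$ is an isomorphism in $\Pt_Y(\C)$. Therefore $h^*$ is conservative on $\M$ for every $h$, and $\C$ is $\M$-protomodular.

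The only step requiring care is Step 1: the identification of $F(h^*(m))$ with $(Fh)^*(Fm)$ holds only up to the canonical isomorphisms between $F$ of a chosen pullback and the chosen pullback in $\D$. I would handle this by noting that conservativeness and being an isomorphism are invariant under composing with isomorphisms. Concretely, $(Fh)^*(Fm)=\beta\circ F(h^*(m))\circ\alpha^{-1}$ for the canonical isomorphisms $\alpha,\beta$, so $(Fh)^*(Fm)$ is an isomorphism whenever $F(h^*(m))$ is.
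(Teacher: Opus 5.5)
Your proposal is correct and is the ``straightforward adaptation'' of Bourn's argument that the paper cites without giving details. Apply the pullback-preserving $F$, identify $F(h^*(m))$ with $(Fh)^*(Fm)$ up to the canonical comparison isomorphisms $\alpha,\beta$, use $Fm\in\N$ and the $\N$-protomodularity of $\D$ to get $Fm$ invertible, and reflect back using that $F$ is conservative on $\M$; your handling of $\alpha,\beta$ is correct, and it is the only point that needs any care.
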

\begin{proof} The proof is a straightforward adaptation of the well-known result concerning  ``ordinary'' protomodularity~\cite{NEKEAC}.
\end{proof}

\begin{corollary}\label{cor:top}
Let $F\colon \C\to \D$ be a topological functor between finitely complete categories, and $\M$ a class of $F$-initial morphisms of $\C$. Then $\C$ is $\M$-protomodular provided that $\D$ is $F(\M)$-protomodular.

In particular, if $\D$ is protomodular, then $\C$ is $\SMono$-protomodular.
\end{corollary}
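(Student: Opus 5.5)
The plan is to apply Theorem~\ref{thm: F:CC->DD} with $\N=F(\M)$. So I need three facts about $F$: it preserves pullbacks, it is conservative on $\M$, and $F(\M)\subseteq F(\M)$. The last holds trivially.

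Pullback preservation is standard for topological functors. A topological functor has both adjoints: the discrete-structure functor on the left and the indiscrete-structure functor on the right. Being a right adjoint, $F$ preserves all limits, in particular pullbacks.

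For conservativeness on $\M$, take $m\colon A\to B$ in $\M$ with $F(m)$ an isomorphism in $\D$. Topological functors are faithful and amnestic, and $m$ is $F$-initial. Consider $F(m)^{-1}\colon FB\to FA$. The composite $F(m)F(m)^{-1}=1_{FB}=F(1_B)$ underlies the morphism $1_B$. By initiality of $m$, $F(m)^{-1}$ lifts to a morphism $n\colon B\to A$ with $F(n)=F(m)^{-1}$ and $mn=1_B$. Faithfulness then gives $nm=1_A$, since $F(nm)=1_{FA}$. Hence $m$ is an isomorphism, and Theorem~\ref{thm: F:CC->DD} yields that $\C$ is $\M$-protomodular.

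For the ``in particular'' statement, take $\M=\SMono$. The main step is to check that strong monomorphisms in the total category of a topological functor are $F$-initial; I expect this to be the main obstacle. I would argue as follows. Let $m\colon A\to B$ be a strong monomorphism. Then $F(m)$ is a monomorphism, because $F$ preserves pullbacks and hence kernel pairs. Take the $F$-initial lift $\bar m\colon \bar A\to B$ of $F(m)$; here $\bar A$ has underlying object $FA$ with the initial structure. The identity on $FA$ lifts to a morphism $e\colon A\to\bar A$ with $\bar m e=m$, and $e$ is bijective on the underlying object, i.e.\ $F(e)=1$.

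The morphism $e$ is an epimorphism in $\C$. Indeed, if $ue=ve$, then $F(u)=F(v)$, so $u=v$ by faithfulness. Apply the diagonal fill-in property of the strong mono $m$ to the square $\bar m\circ 1_{\bar A}=m\circ e$, taking $1_A$ along the top. This gives $d\colon\bar A\to A$ with $de=1_A$. Then $F(d)=1$, so $ed=1$ by faithfulness, and $e$ is an isomorphism. Therefore $m\cong\bar m$ is initial.

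Finally, $\D$ is protomodular, i.e.\ $\Mor$-protomodular, and hence $F(\SMono)$-protomodular for the subclass $F(\SMono)$. The first part then gives that $\C$ is $\SMono$-protomodular.
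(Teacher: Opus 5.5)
Your proof is correct and follows the paper's route: you apply Theorem~\ref{thm: F:CC->DD} with $\N=F(\M)$, using that a topological functor preserves limits, is conservative on initial morphisms, and that strong monomorphisms are initial; the paper only cites these facts, whereas you verify them directly. One small slip: the square for the diagonal fill-in should read $m\circ 1_A=\bar m\circ e$ (with $e$ the epimorphism and $1_A$ along the top), not $\bar m\circ 1_{\bar A}=m\circ e$, which does not typecheck.
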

\begin{proof}
A topological functor preserves limits and colimits, it is conservative on initial morphisms (see \cite{ST}), and every strong monomorphism is initial. Hence the statements of our corollary follow from the theorem above.
\end{proof}

\begin{examples}\label{exe:Ord}
It is well-known that:
\begin{itemize}
\item $\Set^\op$ is a (co)complete protomodular category~\cite{NEKEAC,BB};
\item the forgetful functors $\Top\to\Set$, $\Ord\to\Set$, and more generally $V\mbox{-}\Cat\to\Set$ (see \cite{CT}) are topological, therefore also $\Top^\op\to\Set^\op$, $\Ord^\op\to\Set^\op$, $(V\mbox{-}\Cat)^\op\to\Set^\op$ are topological since the notion of topological functor is self-dual. (Here $\Top$ denotes the category of topological spaces and continuous maps, $\Ord$ denotes the category of preordered sets and monotone maps, and $V\mbox{-}\Cat$ denotes the category of $V$-categories and $V$-functors, for a commutative and unital quantale $V$.)
\end{itemize}
Hence Corollary \ref{cor:top} guarantees that $\Ord^\op,\;\Top^\op,\;(V\mbox{-}\Cat)^\op$ are $\SMono$-protomodular.

None of the categories $\Top^\op$, $\Ord^\op$, and consequently $(V\mbox{-}\Cat)^\op$ in general, are protomodular.
To prove this claim we give an example of a non-strong monomorphism $m$ with $h^*(m)$ isomorphism. We describe our diagram in $\Ord$, for simplicity. In the following diagram
\[
\xymatrix@=30pt{A \ar[r]^-h \ar@{ >->}@<-2pt>[d]_-\alpha & C \ar@{ >->}@<-2pt>[d]_-{\alpha'} \\ B  \ar[r]_-g \ar[d]_-m \ar@{>>}@<-2pt>[u]_-\beta & D \ophalfsplitpushout \ar@{>>}@<-2pt>[u]_-{\beta'} \ar[d]|-\cong \\ B' \ar[r] & D' }
\]
$A=\{x_1,x_2,y_1,y_2\}$ with $x_1\ge x_2\le y_2\ge y_1$,
$C=\{1,x_2,y_2\}$ with $1\geq x_2\le y_2\ge 1$, $B$ and $B'$ have as
underlying set $\{x_1,x_2,y_1,y_2,x_0, y_0\}$, the order on $B$ is given
by the order on $A$ together with $x_0\le x_1,\,y_1\le y_0$, and the
order on $B'$ has the extra relation $x_0\le y_0$. The morphism $h\colon
A\to C$ is defined by $h(x_1)=h(y_1)=1$, $h(x_2)=x_2$, $h(y_2)=y_2$. The
inclusions $\alpha\colon A\to B$, $\alpha'\colon C\to D$ are split monomorphisms,
with retractions $\beta\colon B\to A$, $\beta'\colon D\to C$ where both identify
$x_0$ with $x_1$ and $y_0$ with $y_1$. Now it is easy to check that the
bijection $m\colon B\to B'$ is not an isomorphism, although its pushout
along $g$ is an isomorphism, since the identification of $x_1$ and $y_1$
by $h$ implies that $x_0\le y_0$ in $D$.
\end{examples}

\begin{examples}\label{exe:Grp}
Analogously:
\begin{itemize}
\item $\Grp$ is a (co)complete protomodular category;
\item as shown in \cite{CM-FM, CM1, CM2} respectively, the forgetful functors $\OrdGrp\to\Grp$, $\ROrdGrp\to\Grp$, $V$-$\Grp\to\Grp$ are topological, where $\ROrdGrp$ denotes the category of right-ordered groups and monotone group homomorphisms, and $V\mbox{-}\Grp$ has as objects $V$-groups and as morphisms $V$-functors which are group homomorphisms, for a suitable quantale $V$ (as introduced in \cite{CM1}).
\end{itemize}
Hence, using again Corollary \ref{cor:top}, we conclude that the following categories are $\SMono$-pro\-to\-mo\-du\-lar:
\[\OrdGrp,\;\ROrdGrp,\;V\mbox{-}\Grp.\]
None of these categories is protomodular, as shown in \cite{CM-FM, CM1, CM2} respectively.
\end{examples}

\begin{example}\label{ex: paratopological groups}
It was shown in \cite{BC} that the category $\TopGrp$ of topological groups and continuous group homomorphisms is protomodular.  If in the definition of topological group we drop the continuity of the inversion map we end up with the notion of \emph{paratopological group}. Let $\PTopGrp$ be the category of paratopological groups and continuous group homomorphisms. It is a complete and cocomplete category, and the forgetful functor $\PTopGrp\to\Grp$ is topological; indeed the proof of topologicity of this functor uses the same arguments used to show that the forgetful functor $\TopGrp\to\Grp$ is topological. Hence, by Corollary \ref{cor:top}, $\PTopGrp$ is $\SMono$-protomodular.

However, in contrast to the case of $\TopGrp$, $\PTopGrp$ is not protomodular, showing that the inversion map of each group is essential for protomodularity.
Let us show that the Split Short Five Lemma fails.

Let $Y$ be the group $(\mathbb{R},+)$ equipped with the topology $\mathcal{T}_S$ of the Sorgenfrey line; that is, the topology having $\{[0,a[\,;\,a\in\bkR,\,a>0\}$ as basic neighbourhoods of $0$. It is a paratopological group but not a topological group.

In $(\bkR,+)\times(\bkR,+)$ we may consider two different topologies: the product topology $\mathcal{T}_1=\mathcal{T}_S\times\mathcal{T}_S$, and $\mathcal{T}_2$ having as basic neighbourhoods of $0$
\[\{U_a=\bkR\times\,]0,a[\,\cup\,[0,a[\times\{0\}\,;\,a\in\bkR,\,a>0\}.\]
It is straightforward to check that both make $\bkR\times\bkR$ a non-topological paratopological group.

Both $\mathcal{T}_1$ and $\mathcal{T}_2$ make the following split short exact sequence in $\Grp$
\[
	\xymatrix@=25pt{ Y \ar@{ >->}[r]^-{\la 1,0\ra}  & (\RR\times \RR, \mathcal{T}_i) \ar@<2pt>[r]^-{\pi_2}  & Y \ar@<2pt>[l]^-{\la 0,1\ra}}
\]
a split short exact sequence in $\PTopGrp$; indeed, as the product topology, the case of $\mathcal{T}_1$ clearly works. Let us check our claim for $\mathcal{T}_2$:
\begin{itemize}
\item $k=\langle 1,0\rangle$ is a strong monomorphism: $k^{-1}(U_a)=[0,a[$.
\item $s=\langle 0,1\rangle$ is continuous: $s^{-1}(U_a)=[0,a[$.
\item To show that $\pi_2$ is continuous it is enough to notice that $\pi_2$ is continuous in $0$, and this follows from $\pi_2(U_a)= [0,a[$.
\end{itemize}
Now, for each $a>0$, $U_a\supseteq [0,a[\times\,[0,a[$, hence $U_a$ is clearly a neighbourhood of $0$ in $\mathcal{T}_1$, which implies that the identity map $(\bkR\times\bkR,\mathcal{T}_1)\to(\bkR\times\bkR,\mathcal{T}_2)$ is continuous. Since it is not an isomorphism, this example shows that the Split Short Five Lemma fails.
\end{example}

\begin{example}\label{ex: Mon with M=isos and maps in Grp} In $\Mon$, consider the class of morphisms $\M$ to be all isomorphisms in $\Mon$ and all group homomorphisms. Then $\Mon$ is $\M$-protomodular. Indeed, consider diagram~\eqref{diag: def of h*} with $m$ a group homomorphism and $h^*(m)$ an isomorphism. Since $f$ is surjective, then $Y$ is also a group, so that the right face of this diagram lives in $\Grp$. The proof that $m$ is an isomorphism follows as in $\Grp$, since the only inverses we must take are in $X$ or $X'$.
\end{example}

\section{$\Spl$-protomodular categories}\label{sec: Spl-proto}
Consider the class $\Spl$ of split monomorphisms of $\C$. Every $\SMono$-protomodular category is in particular a $\Spl$-protomodular category. Next we will present an example of an $\Spl$-protomodular category which is not $\SMono$-protomodular. Before this example we note that:

\begin{proposition}\label{prop: Spl-proto => h* reflects monos}
If $\C$ is $\Spl$-protomodular, then every change-of-base functor $h^*$ reflects monomorphisms.
\end{proposition}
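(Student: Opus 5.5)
The plan is to use the standard trick that a morphism is a monomorphism exactly when the diagonal into its kernel pair is an isomorphism, and to note that such diagonals are always split monomorphisms.

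Let $h\colon Z\to Y$ and let $m\colon (f',s')\to (f,s)$ be a morphism in $\Pt_Y(\C)$ such that $h^*(m)$ is a monomorphism in $\Pt_Z(\C)$. We want $m$ to be a monomorphism.

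\textbf{Step 1: the kernel pair in $\Pt_Y(\C)$.} Form the kernel pair $\Eq(m)=X'\times_X X'$ of $m$ in $\C$, with projections $p_1,p_2$. It becomes a point over $Y$ via $f'p_1=f'p_2$, with section $\la s',s'\ra$; this is well defined because $ms'=s=ms'$. With this structure it is the kernel pair of $m$ in $\Pt_Y(\C)$, since finite limits in $\Pt_Y(\C)$ are computed as in $\C$.

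\textbf{Step 2: the diagonal is a split monomorphism in $\Pt_Y(\C)$.} The diagonal $\delta=\la 1_{X'},1_{X'}\ra\colon X'\to \Eq(m)$ is a morphism of points, because $\delta s'=\la s',s'\ra$ and $f'p_1\delta=f'$. It is split by $p_1$, so $\delta\in\Spl$. Moreover, by the remark after the definition of change-of-base functors, $m$ is a monomorphism in $\C$ if and only if it is one in $\Pt_Y(\C)$, and this holds if and only if $\delta$ is an isomorphism.

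\textbf{Step 3: apply $h^*$.} As noted in Section~\ref{sec: M-protomodularity}, $h^*$ preserves finite limits. Hence $h^*(\Eq(m))$, with $h^*(p_1)$ and $h^*(p_2)$, is the kernel pair of $h^*(m)$ in $\Pt_Z(\C)$, and $h^*(\delta)$ is the corresponding diagonal. Since $h^*(m)$ is a monomorphism, this diagonal $h^*(\delta)$ is an isomorphism.

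\textbf{Step 4: conclude.} Since $\C$ is $\Spl$-protomodular and $\delta\in\Spl$ with $h^*(\delta)$ an isomorphism, $\delta$ is an isomorphism. Therefore $m$ is a monomorphism.

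The only point needing care is Step 3: identifying $h^*(\delta)$ with the diagonal of the kernel pair of $h^*(m)$. This follows from the preservation of kernel pairs by $h^*$, which in turn comes from commutation of pullbacks with pullbacks. No other obstacle is expected.
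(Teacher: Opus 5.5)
Your proof is correct and follows essentially the same route as the paper's. Both take the diagonal $\la 1_{X'},1_{X'}\ra\colon X'\to\Eq(m)$ as a split monomorphism in $\Pt_Y(\C)$ (with point structure $(f'p_1,\la s',s'\ra)$), show that $h^*$ sends it to an isomorphism, and conclude by $\Spl$-protomodularity that it is an isomorphism, so that $m$ is a monomorphism. The only difference is cosmetic: you justify Step 3 by the left exactness of $h^*$, whereas the paper checks directly, via the top pullback face of the change-of-base diagram, that the relevant squares over $\Eq(m)$ are pullbacks.
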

\begin{proof}
Consider diagram~\eqref{diag: def of h*} such that $h^*(m)$ is a monomorphism. We extend the diagram by taking kernel pairs of $h^*(m)$ and of $m$. Since the top part of diagram~\eqref{diag: def of h*} is a pullback, so are the following downward commutative squares
\[
\xymatrix@=25pt{ Z\times_Y X' \pullback \ar@{=}[d] \ar[r] & \Eq(m) \ar@<-5pt>[d]_-{m_1} \ar@<5pt>[d]^-{m_2} \\ Z\times_Y X' \ar[r]_-{\pi_{X'}} & X'\ar[u]|-{s_0},}
\]
where $s_0=\la 1_{X'},1_{X'}\ra$. Consequently, the upward square is also a pullback. We get
\[
\xymatrix@R=30pt@C=50pt{Z\times_Y X'  \ar[rr]^-{\pi_{X'}}  \ar@{=}[dr]^-{h^*(s_0)=1_{Z\times_Y X'}} \ar@/_/@<-7pt>[ddr]_-{\pi'_Z} & & X' \ar@/_/@<-2pt>[ddr]_(.25){f'} \ar@{ >->}[dr]^-{s_0}  \\
		& Z\times_Y X' \ar[rr] \ar@<-2pt>[d]_-{\pi'_Z} & & \Eq(m) \ar@<-2pt>[d]_(.3){f'm_1} \\
		& Z \ar@<-2pt>[u]_-{i'_Z} \ar[rr]_-h \ar@/^/@<3pt>[uul]_-{i'_Z} & & Y. \ar@<-2pt>[u]_(.7){s_0s'} \ar@/^/@<-2pt>[uul]_(.65){s'}}
\]
Since $s_0$ is a split monomorphism such that $h^*(s_0)$ is an isomorphism, then $s_0$ is an isomorphism, by assumption. It easily follows that each $m_i$ is a isomorphism; thus $m$ is a monomorphism.
\end{proof}

\begin{lemma}\label{lem: h* reflects monos iff pbs reflect monos} In a finitely complete category $\C$, the following conditions are equivalent:
\begin{itemize}
	\item[(i)] any $h^*$ reflects monomorphisms;
	\item[(ii)] pullbacks in $\C$ reflect monomorphisms.
\end{itemize}
\end{lemma}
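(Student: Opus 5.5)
I read condition (ii) as follows: for every pullback
\[
\xymatrix@=25pt{ P \ar[d]_-{\pi_Z} \ar[r]^-{\pi_X} & X \ar[d]^-f \\ Z \ar[r]_-h & Y }
\]
of an arbitrary morphism $f$ along an arbitrary $h$, if $\pi_Z$ is a monomorphism then so is $f$. This reading is my guess; it is consistent with the Introduction's claim that pullbacks reflect monomorphisms in $\Spl$-protomodular categories. The plan is to prove the two implications directly, using kernel pairs, in the spirit of the proof of Proposition~\ref{prop: Spl-proto => h* reflects monos}.

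\emph{(ii) $\Rightarrow$ (i).} Take a morphism of points $m\colon (f',s')\to(f,s)$ in $\Pt_Y(\C)$ and $h\colon Z\to Y$, and consider the cube~\eqref{diag: def of h*}. By the usual pasting of pullbacks, the top face of the cube is a pullback, since the front face and the composite of top and back faces are pullbacks over $h$. Thus $h^*(m)$ is the pullback of $m$ along $\pi_X$. If $h^*(m)$ is a monomorphism, then (ii) gives that $m$ is a monomorphism in $\C$, hence in $\Pt_Y(\C)$.

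\emph{(i) $\Rightarrow$ (ii).} Let the square above be a pullback with $\pi_Z$ mono. Form the kernel pair $m_1,m_2\colon \Eq(f)\to X$ with diagonal $\delta=\la 1_X,1_X\ra$. Then $(m_1,\delta)$ is a point over $X$, and $m_1$ is a morphism of points $(m_1,\delta)\to(1_X,1_X)$ in $\Pt_X(\C)$. Apply the change-of-base functor $\pi_X^*\colon \Pt_X(\C)\to\Pt_P(\C)$. The pullback $P\times_X \Eq(f)$ consists of pairs $(p,x')$ with $fx'=f\pi_X(p)=h\pi_Z(p)$. That is, it is $P\times_Y X$, which is isomorphic to $P\times_Z(Z\times_Y X)=P\times_Z P=\Eq(\pi_Z)$. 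Under this identification, $\pi_X^*(m_1)$ becomes the first projection $\Eq(\pi_Z)\to P$. Since $\pi_Z$ is a monomorphism, this projection is an isomorphism, in particular a monomorphism. By (i), $m_1$ is a monomorphism. As $m_1\delta=1_X$, $m_1$ is an isomorphism, so $m_1=m_2$ and $f$ is a monomorphism.

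\emph{Main obstacle.} The delicate step is checking rigorously that $\pi_X^*(m_1)$ corresponds, up to isomorphism in $\Pt_P(\C)$, to the kernel-pair projection of $\pi_Z$, compatibly with the sections. I would verify this by pasting pullbacks: $P\times_X\Eq(f)$ is the pullback of $f$ along $f\pi_X=h\pi_Z$, and this factors through the original pullback square. The remaining verifications are routine.
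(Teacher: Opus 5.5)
Your proposal is correct and is essentially the paper's proof. For (ii) $\Rightarrow$ (i), the top face of the cube~\eqref{diag: def of h*} exhibits $h^*(m)$ as a pullback of $m$; this follows by pasting, because the top face pasted onto the front face gives the back face, not the ``composite of top and back'' as you wrote. For (i) $\Rightarrow$ (ii), you apply change of base along the top arrow of the pullback to the morphism of points $m_1\colon (m_1,\delta)\to(1_X,1_X)$ built from the kernel pair, which is exactly what the paper does with $n_1\colon (n_1,s_0)\to(1_C,1_C)$. The step you flag as the main obstacle is routine: it is the paper's one-line remark that the top squares are pullbacks, and since mono-ness in $\Pt_P(\C)$ is just mono-ness in $\C$, no compatibility with sections needs checking.
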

\begin{proof}
(ii) $\Rightarrow$ (i) Given diagram~\eqref{diag: def of h*} with $h^*(m)$ a monomorphism, we conclude that $m$ is a monomorphism since the top part of this diagram is a pullback in $\C$.\\
(i) $\Rightarrow$ (ii) Consider the following diagram where the bottom square is a pullback in $\C$, $m$ is a monomorphism and the top is obtain by taking vertical kernel pairs
\[
	\xymatrix@=25pt{ A \ar@{=}[d]  \ar[r] & \Eq(n) \ar@<-5pt>[d]_-{n_1} \ar@<5pt>[d]^-{n_2} \\ A \ar@{ >->}[d]_-m \ar[r] \pullback & C \ar[d]^-n \ar[u]|-{s_0} \\ B \ar[r] & D;}
\]
here $s_0=\la 1_C,1_C\ra$. It follows that the downward top squares are pullbacks. This implies that $n_1\colon (n_1,s_0)\to (1_C,1_C)$ is a morphism in $\Pt_C(\C)$ such that $f^*(n_1)=1_A$. By assumption, $n_1$ is a monomorphism, which implies it is an isomorphism in $\C$ (since it is also a split epimorphism). It easily follows that $n$ is a monomorphism.
\end{proof}

\begin{corollary}\label{cor: pointed + Spl-proto => (ker f=0 => f mono)}
Let $\C$ be a pointed $\Spl$-protomodular category. If $\ker(n\colon C\to D)=0$, then $n$ is a monomorphism.
\end{corollary}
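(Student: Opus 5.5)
The plan is to realise $\ker(n)$ as a pullback of $n$ and then combine Proposition~\ref{prop: Spl-proto => h* reflects monos} with Lemma~\ref{lem: h* reflects monos iff pbs reflect monos}. Since $\C$ is $\Spl$-protomodular, Proposition~\ref{prop: Spl-proto => h* reflects monos} says that every change-of-base functor $h^*$ reflects monomorphisms. By the implication (i) $\Rightarrow$ (ii) of Lemma~\ref{lem: h* reflects monos iff pbs reflect monos}, pullbacks in $\C$ then reflect monomorphisms. Concretely: in any pullback square with $n\colon C\to D$ on the right and $m\colon A\to B$ on the left, if $m$ is a monomorphism then so is $n$.

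Next, use that $\C$ is pointed. The kernel of $n$ is the pullback of $n$ along $0_D\colon 0\to D$:
\[
\xymatrix@=25pt{ \Ker(n) \ar[r]^-{\ker(n)} \ar[d] \pullback & C \ar[d]^-n \\ 0 \ar[r]_-{0_D} & D.}
\]
Here the left vertical arrow is $\Ker(n)\to 0$. By hypothesis $\Ker(n)=0$, so this left arrow is $1_0$, which is a monomorphism. We therefore have a pullback square with $n$ on the right and a monomorphism on the left. Reflection of monomorphisms along pullbacks then gives that $n$ is a monomorphism.

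The one point to check is the orientation in Lemma~\ref{lem: h* reflects monos iff pbs reflect monos}(ii). In its proof, the monomorphism $m$ is the pullback of $n$ along the bottom map $B\to D$, and the conclusion concerns $n$. Our square fits this pattern with $B=0$, $A=\Ker(n)=0$ and bottom map $0_D$. So the lemma applies directly, and no genuine obstacle remains.
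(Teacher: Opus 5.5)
Your proof is correct and follows the paper's argument. The paper's proof just says the corollary follows by combining the proposition that $\Spl$-protomodularity makes every change-of-base functor reflect monomorphisms with the implication (i)$\Rightarrow$(ii) of the lemma equating this with pullbacks reflecting monomorphisms; you make explicit its application to the kernel square of $n$ along $0\to D$, with the orientation checked correctly.
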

\begin{proof} This easily follows by combining Proposition~\ref{prop: Spl-proto => h* reflects monos} and Lemma~\ref{lem: h* reflects monos iff pbs reflect monos}.
\end{proof}

\begin{remark}
It is well-known that in every protomodular category pullbacks reflect monomorphisms~\cite{NEKEAC}. We do not know whether the equivalent properties of Lemma~\ref{lem: h* reflects monos iff pbs reflect monos} characterise $\Spl$-protomodular categories.
\end{remark}

\begin{example}\label{ex: Mon is not Spl-proto} $\Mon$ is not a $\Spl$-protomodular category since there exist monoid homomorphisms with trivial kernel that are not monomorphisms. So, we find pullbacks that do not reflect monomorphisms (see Corollary~\ref{cor: pointed + Spl-proto => (ker f=0 => f mono)}).
\end{example}

A category is called \defn{semi-abelian}~\cite{SAC} when it is pointed, finitely (co)complete, Barr-exact~\cite{Barr} and protomodular. Let $\C$ be a semi-abelian category and $\M$ a class of monomorphisms in $\C^\op$ such that all morphisms in $\M^\op$ are cokernels. By Proposition~\ref{prop: M-proto iff orthogonal to M}, $\C^\op$ is $\M$-protomodular if and only if for any pushout diagram in $\C$
\begin{equation}\label{diag: po of split mono}
	\xymatrix@=30pt{A \ar[r]^-h \ar@{ >->}@<-2pt>[d]_-\alpha & C \ar@{ >->}@<-2pt>[d]_-{\alpha'} \\ B  \ar[r]_-u \ar@{>>}@<-2pt>[u]_-\beta & D,
		\ophalfsplitpushout \ar@{>>}@<-2pt>[u]_-{\beta'} }
\end{equation}
where $\alpha$ is a split monomorphism with retraction $\beta$, the pair of morphisms $(u,\beta)$ is orthogonal to the class $\M^\op$. This means that: if $(u,\beta)$ factors through a morphism $e\in \M^\op$ as in the commutative diagram
\[
	\xymatrix@=25pt{D & B \ar[l]_-u \ar@{>>}[r]^-\beta \ar@{-|>}[d]|-{e\in \M^\op} & A, \\& E \ar[ul]^a \ar[ur]_-b}
\]
then $e$ is an isomorphism. Since $e$ is a cokernel, it must be the cokernel of its kernel; thus the vertical arrows in the following diagram give a short exact sequence. By taking the kernels of $\beta, e$ and $u$, there exists a factorisation as above if and only if there exist the induced morphisms $\overline{a}, \overline{b}$
\begin{equation}\label{diag: main in SAC}
\vcenter{\xymatrix@C=10pt@R=15pt{
	& & \Ker(e) \ar@{|>->}[dd]|(.4){\ker(e)}  \ar@{.>}[dl]_-{\overline{b}} \ar@{.>}[dr]^-{\overline{a}} \\
	& \Ker(\beta) \ar@{|>->}[dr]|-{\ker(\beta)}  & & \Ker(u) \ar@{|>->}[dl]|-{\ker(u)} \\
	D & & B \ar[ll]_-u \ar@{>>}[rr]^-\beta \ar@{-|>}[dd]|-{e\in \M^\op} & & A.\\\\
	& & E \ar@{.>}[uull]^-a \ar@{.>}[uurr]_-b}}
\end{equation}

\begin{lemma}\label{lem: M-proto for Nil_c} Let $\C$ be a semi-abelian category and $\M$ a class of monomorphisms in $\C^\op$ such that all morphisms in $\M^\op$ are cokernels. Then $\C^\op$ is $\M$-protomodular if and only if, for any pushout of type~\eqref{diag: po of split mono} and cokernel $e\in \M^\op$, we have
\begin{equation}\label{diag: M-proto in C^op with kernels}
 \left(\, \Ker(e) \subseteq \Ker(\beta)\cap \Ker(u) \,\right) \;\;\To\;\; \Ker(e)=0.
\end{equation}
\end{lemma}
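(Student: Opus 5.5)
The plan is to read off the lemma from the discussion preceding it, which already reduces $\M$-protomodularity of $\C^\op$ to a factorisation problem in $\C$, and then translate that problem into a statement about kernels.

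First, apply Proposition~\ref{prop: M-proto iff orthogonal to M} in $\C^\op$. It applies because $\C^\op$ is finitely complete, $\C$ being semi-abelian and hence finitely cocomplete, and because $\M$ consists of monomorphisms of $\C^\op$. It tells us that $\C^\op$ is $\M$-protomodular if and only if, for every pushout~\eqref{diag: po of split mono} of a split monomorphism $\alpha$ along $h$, the pair $(u,\beta)$ is orthogonal to $\M^\op$. Since every $e\in\M^\op$ is an epimorphism in $\C$, the diagonal in an orthogonality square is automatically unique. Orthogonality therefore reduces to this: whenever $u=ae$ and $\beta=be$ for some $e\in\M^\op$ and some $a,b$, the morphism $e$ is an isomorphism.

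Second, translate the existence of such $a,b$ into a kernel condition. Write $e=\coker(\ker(e))$, which is possible because $e$ is a cokernel. By the universal property of the cokernel, $u$ factors through $e$ if and only if $u\ker(e)=0$, that is, if and only if $\ker(e)$ factors through $\ker(u)$, which is the morphism $\overline{a}$ in~\eqref{diag: main in SAC}. Likewise, $\beta$ factors through $e$ if and only if $\ker(e)$ factors through $\ker(\beta)$, which is $\overline{b}$. Read as subobjects of $B$, these two conditions together say exactly that $\Ker(e)\subseteq \Ker(\beta)\cap\Ker(u)$.

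Third, note that a cokernel $e$ is an isomorphism if and only if $\Ker(e)=0$. One direction is immediate. For the other, if $\ker(e)=0$ then $e=\coker(0)$, which is an isomorphism. Combining the three steps gives precisely condition~\eqref{diag: M-proto in C^op with kernels}, quantified over all pushouts of type~\eqref{diag: po of split mono} and all $e\in\M^\op$.

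The only point needing care is the first step: one must check that the orthogonality in Proposition~\ref{prop: M-proto iff orthogonal to M} dualises to factorisations of the pair $(u,\beta)$ through $e$, with the pushout playing the role of the pullback. I expect this to be routine bookkeeping rather than a real obstacle. No protomodularity of $\C$ itself is used beyond the existence of kernels, cokernels and pushouts.
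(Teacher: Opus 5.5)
Your proposal is correct. Your first two steps are the reduction the paper carries out in the discussion just before the lemma. There, Proposition~\ref{prop: M-proto iff orthogonal to M} is applied in $\C^\op$, and the factorisations $u=ae$, $\beta=be$ are translated into the morphisms $\overline{a},\overline{b}$ of diagram~\eqref{diag: main in SAC}.

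Where you differ is the final step, showing that $\Ker(e)=0$ forces $e$ to be an isomorphism.
\begin{itemize}
\item The paper uses the protomodularity of $\C$: in a pointed protomodular category a morphism with trivial kernel is a monomorphism. It then concludes because a monomorphic cokernel is an isomorphism.
\item You use the purely formal fact that a cokernel is the cokernel of its own kernel. Hence $e\cong\coker(0\to B)=1_B$.
\end{itemize}
Your argument works in any pointed category with kernels and cokernels. So, as you remark, the lemma needs only that $\C$ be pointed, finitely complete (for kernels and their intersection) and finitely cocomplete (so that Proposition~\ref{prop: M-proto iff orthogonal to M} applies in $\C^\op$). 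The protomodularity and exactness of $\C$ are not used. The paper's one-line justification is natural in the semi-abelian setting, but it hides this extra generality.
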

\begin{proof} Immediate from diagram~\eqref{diag: main in SAC} and the fact that $\Ker(e)=0$ implies that (the cokernel) $e$ is a monomorphism, since $\C$ is a protomodular category. Thus, $e$ is an isomorphism.
\end{proof}

Given a group $G$, we consider a descending chain of normal subgroups of $G$
\[
	G= \gamma_1(G) \supseteq \gamma_2(G) \supseteq \cdots
\]
where $\gamma_1(G)=G$, $\gamma_2(G)=[G,G]=\{xyx^{-1}y^{-1}: x,y\in G\}$, $\gamma_3(G)=[\gamma_2(G),G]$, $\dots$. Given a natural number $c\ge 1$, we say that $G$ is a \defn{nilpotent group of class $c$} if $\gamma_{c+1}(G)=0$. A nilpotent group of class 1 is just an abelian group. We fix a natural number $c\ge 2$ and denote by $\Nil_c$ the variety of nilpotent groups of class (at most) $c$. It is easy to check that $\Nil_c$ is a semi-abelian variety such that $\Ab\subseteq \Nil_c\subseteq \Grp$. As a consequence, the abelianisation functor for $\Nil_c$ is defined the same way as for $\Grp$
\[
	\begin{array}{rcl}
		(\;\;)^{\ab}: \Nil_c & \longrightarrow & \Ab. \\
		G & \longmapsto & G^\ab=G/[G,G]
	\end{array}
\]
The reflection morphism with respect to abelianisation is given by a surjective homomorphism denoted by $\eta_G\colon G\rightarrow G^\ab$, for any $G$ in $\Nil_c$.

\begin{lemma}\label{lem: Ker(beta) and Ker(u) in [B,B]} Under the conditions of Lemma~\ref{lem: M-proto for Nil_c} applied to $\C=\Grp$ or $\C=\Nil_c$, we have $\Ker(\beta)\cap \Ker(u)\subseteq [B,B]$.
\end{lemma}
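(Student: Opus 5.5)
The plan is to pass to abelianisations and use the fact that pushouts of split monomorphisms in $\Ab$ are computed by direct sums. Since $[B,B]=\Ker(\eta_B)$, it suffices to show that $\eta_B$ kills $\Ker(\beta)\cap\Ker(u)$. The pushout~\eqref{diag: po of split mono} is split: $\beta'\alpha'=1_C$, $\beta' u=h\beta$, $u\alpha=\alpha' h$. In $\Grp$ or $\Nil_c$ such a pushout can be described via semidirect products. Write $B\cong K\rtimes A$ with $K=\Ker(\beta)$. Then $D\cong (K\rtimes A)+_A C$, whose kernel of $\beta'$ is generated, as a normal subgroup, by the images of conjugates $c\,u(k)\,c^{-1}$ with $k\in K$, $c\in C$.

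I would avoid computing this kernel explicitly and argue through the left adjoint $(\;\;)^{\ab}$ instead. Abelianisation preserves pushouts and split monomorphisms, so applying it gives a pushout in $\Ab$ of $\alpha^{\ab}$ along $h^{\ab}$. In $\Ab$ we have $B^{\ab}\cong K'\oplus A^{\ab}$, where $K'=\Ker(\beta^{\ab})$ is the image of $K$ (the coinvariants $K/[K,B]$). Pushouts of split monos in $\Ab$ are computed by direct sums, so $D^{\ab}\cong K'\oplus C^{\ab}$. Under these identifications $u^{\ab}=1_{K'}\oplus h^{\ab}$ and $\beta^{\ab}$ is the projection onto $A^{\ab}$. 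Hence
\[
\Ker(\beta^{\ab})\cap\Ker(u^{\ab})=\Ker(h^{\ab})\cap 0=0 .
\]
So $u^{\ab}$ is injective on $\Ker(\beta^{\ab})$, and this is the key step.

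Now take $x\in\Ker(\beta)\cap\Ker(u)$. By naturality of $\eta$, $\beta^{\ab}(\eta_B(x))=\eta_A(\beta(x))=0$ and $u^{\ab}(\eta_B(x))=\eta_D(u(x))=0$. Therefore $\eta_B(x)\in \Ker(\beta^{\ab})\cap\Ker(u^{\ab})=0$, so $x\in\Ker(\eta_B)=[B,B]$. This argument works for $\C=\Grp$ and for $\C=\Nil_c$ alike, since in both cases the reflection to $\Ab$ is $G\mapsto G/[G,G]$ and is a left adjoint.

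The main obstacle is justifying rigorously that the abelianised square is a pushout of split monomorphisms in $\Ab$ whose pushout object is $K'\oplus C^{\ab}$ with $u^{\ab}=1\oplus h^{\ab}$. Preservation of the pushout follows from the adjunction. To identify the pushout, I would check the universal property directly: given maps $p\colon K'\oplus A^{\ab}\to T$ and $q\colon C^{\ab}\to T$ agreeing on $A^{\ab}$, the map $K'\oplus C^{\ab}\to T$ is $(p|_{K'},q)$, and it is unique.
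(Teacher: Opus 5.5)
Your proposal is correct and is essentially the paper's argument. Abelianisation preserves the pushout, and the abelianised square is identified with $1_{K'}\oplus h^{\ab}\colon K'\oplus A^{\ab}\to K'\oplus C^{\ab}$, where $K'$ is the image of $\Ker(\beta)$ in $B^{\ab}$; then $\eta_B(x)\in\Ker(\beta^{\ab})\cap\Ker(u^{\ab})=0$ gives $x\in[B,B]$, while the semidirect-product discussion in your opening paragraph is unnecessary and your explicit check of the universal property in $\Ab$ supplies a step the paper only asserts.
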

\begin{proof} We give the proof for $\C=\Grp$; the proof for $\C=\Nil_c$ is similar. The abelianisation functor preserves the pushout~\eqref{diag: po of split mono} (since it is a left adjoint), so that
\[
	\xymatrix@=35pt{A^\ab \ar[r]^-{h^\ab} \ar@{ >->}@<-2pt>[d]_-{\alpha^\ab} & C^\ab \ar@{ >->}@<-2pt>[d]_-{\alpha'^\ab} \\ B^\ab  \ar[r]_-{u^\ab} \ar@{>>}@<-2pt>[u]_-{\beta^\ab} & D^\ab,
		\ophalfsplitpushout \ar@{>>}@<-2pt>[u]_-{\beta'^\ab} }
\]
is a pushout in $\Ab$. It is easy to check that the kernel of $\beta^\ab$ is isomorphic to the reflection $\eta_B$ restricted to $\Ker(\beta)$; call it $K$. This leads to the isomorphic split short exact sequence in $\Ab$
\[
	\xymatrix@=25pt{ \Ker(\beta^\ab) \ar@{|>->}[r]^-{\ker(\beta^\ab)} \ar@{=}[d] & B^\ab \ar@<2pt>@{>>}[r]^-{\beta^\ab} \ar[d]|-\cong & A^\ab \ar@<2pt>@{ >->}[l]^-{\alpha^\ab} \ar@{=}[d]\\
	\Ker(\beta^\ab) \ar[d]|-{\cong} \ar@{|>->}[r]_-{\la 1,0\ra} & \Ker(\beta^\ab)\oplus A^\ab \ar[d]|-\cong \ar@<2pt>@{>>}[r]^-{\pi_2} & A^\ab \ar@{=}[d] \ar@<2pt>@{ >->}[l]^-{\la 0,1\ra} \\
	K \ar@{|>->}[r]_-{\la 1,0\ra} & K\oplus A^\ab \ar@<2pt>@{>>}[r]^-{\pi_2} & A^\ab.\ar@<2pt>@{ >->}[l]^-{\la 0,1\ra} }
\]
Then, the downward pushout from above may be simplified to
\[
	\xymatrix@R=25pt@C=35pt{A^\ab \ar[r]^-{h^\ab} \ar@{|>->}[d]_-{\la 0,1\ra} & C^\ab \ar@{|>->}[d]^-{\la 0,1\ra} \\ K\oplus A^\ab  \ar[r]_-{1_K\oplus h^\ab} & K\oplus C^\ab. \ophalfsplitpushout}
\]
Now we prove that $\Ker(\beta)\cap \Ker(u)\subseteq [B,B]$. Given $x\in \Ker(\beta)\cap \Ker(u)$, we have $\eta_B(x)\in K$. On the other hand, $u^\ab(\eta_B(x))=\eta_D(u(a))=0$. Using the fact that $u^\ab\cong 1_K\oplus h^\ab$, this last equality is equivalent to $1_K\oplus h^\ab(\eta_B(x),0)=(0,0)$, from which we conclude that $\eta_B(x)=0$; thus $x\in [B,B]$.
\end{proof}

\begin{example}\label{ex: Nil_c^op} We show that $\Nil_c^\op$ is $\Spl$-protomodular and $\PSpl$-protomodular, where $\PSpl$ denotes the class of protosplit monomorphisms, i.e. kernels of split epimorphisms. We also show that $\Nil_c^\op$ is not $\SMono$-protomodular. Since $\Nil_c$ is a semi-abelian variety, $\Nil_c^\op$ is finitely complete.\\
\noindent\textbf{$\Nil_c^\op$ is $\Spl$-protomodular}\\
In this case the morphisms $e\in\M^\op$ as in diagram~\eqref{diag: main in SAC} are split epimorphisms. In $\Nil_c$ (and also in $\Grp$) it is easy to check that a quotient $e$ admits a splitting if and only if $\Ker(e)$ admits a complement in $B$, i.e. a subgroup $H\le B$ such that $\Ker(e)\cap H=0$ and $B=\Ker(e)H$.  By Lemma~\ref{lem: M-proto for Nil_c}, we must prove that for any pushout of type~\eqref{diag: po of split mono} and split epimorphism $e\in \M^\op$, we have
\[
	\left( \,\Ker(e) \subseteq \Ker(\beta)\cap \Ker(u) \,\right) \;\;\To\;\; \Ker(e)=0.
\]
We know that $\Ker(e)$ admits a complement $H$ in $B$ and $\Ker(e) \subseteq [B,B]$ by Lemma~\ref{lem: Ker(beta) and Ker(u) in [B,B]}. We get
\[
	B=\Ker(e)H\subseteq [B,B]H\subseteq B \;\; \To\;\; B=[B,B]H \;\; \To\;\; H=B.
\]
The last implication only holds in $\Nil_c$. By induction on $c$ one can prove that, if $B$ is a nilpotent group of class $c$ and $H\le B$ such that $B=[B,B]H$, then $B=H$. To finish, $\Ker(e)=\Ker(e)\cap B=\Ker(e)\cap H=0$.

\noindent\textbf{$\Nil_c^\op$ is $\PSpl$-protomodular}\\
In this case $e$ is the cokernel of a split monomorphism $i$ (with retraction $r$), whose kernel $\la\la Q\ra\ra$ is the normal closure of $Q$ in $B$. We consider $i$ to be an inclusion
\[
	\xymatrix@R=15pt@C=35pt{ \la\la Q\ra\ra \ar@{ |>->}[rr]^-{\ker(e)}  && B \ar@{|>}[r]^-{e=\coker(i)} \ar@<-2pt>@{>>}[dl]_-r & B/\la\la Q \ra\ra.\\ & Q \ar@{.>}[ul] \ar@<-2pt>@{_{(}->}[ur]_-i}
\]
Suppose that $\la\la Q\ra\ra \subseteq \Ker(\beta)\cap \ker(u)$. By Lemma~\ref{lem: Ker(beta) and Ker(u) in [B,B]}, $\la\la Q\ra\ra\subseteq[B,B]$. Considering $Q$ a subgroup of $B$, we have
\[
	Q=r(Q)\subseteq r(\la\la Q\ra\ra) \subseteq r([B,B])=[r(B),r(B)]=[Q,Q]\subseteq Q.
\]
We conclude that $Q=[Q,Q]$, i.e. it is a perfect (nilpotent) group. Up to this point, our arguments hold both in $\Nil_c$ and in $\Grp$. If $
Q$ is a nilpotent group of class $c$, then $0=\gamma_{c+1}(Q)=\gamma_c(Q)=\cdots =\gamma_1(Q)=Q$; thus $\la\la Q\ra\ra=0$. \\
\noindent\textbf{$\Nil_c^\op$ is not $\SMono$-protomodular}\\
In this case $e$ is just a strong epimorphism, and taking its kernel gives a short exact sequence (with no extra conditions on the basic setting of diagram~\eqref{diag: main in SAC}). In particular, we can consider a pushout~\eqref{diag: po of split mono} where $u$ is a strong epimorphism and take $e\colon \xymatrix@=10pt{B \ar@{|>}[r] &  A\, {}^B\!+ D}$ to be the diagonal (strong epimorphism) of the pushout of $(u,\beta)$. This implies that $\Ker(e)=\Ker(\beta)\cap\Ker(u)$. If $\Nil_c^\op$ is $\SMono$-protomodular, then $\Ker(e)=0$. However, this is not true in general. For example, in the pushout~\eqref{diag: po of split mono}, take $B=D_4=\la r,s: r^4=s^2=1, srs=r^{-1}\ra$ the diedral group of order 8 (which is nilpotent of class 2), $A=\la s\ra$, $C=0$ and $\alpha$ the inclusion. In this setting, $u$ is the cokernel of $\alpha$, so that $u\colon D_4\rightarrow D_4/\la\la s\ra\ra$, with $\la\la s\ra \ra=\{1,s,r^2,r^2s\}$. Consider the normal subgroup $L=\{1,r^2,rs,r^3s\}$ (because it has order 4 in a group of order 8) and define $\beta\colon D_4\to \la s\ra$ by $\beta(L)=\{1\}$ and $\beta(D_4\backslash L)=\{s\}$. We have $\Ker(\beta)\cap \Ker(u)=L\cap \la\la s\ra\ra=\{1,r^2\}\neq 0$.
\end{example}

\begin{example}\label{ex: Grp^op} $\Grp^\op$ is not $\SMono$-protomodular, since neither is $\Nil_c$. Although Lemma~\ref{lem: Ker(beta) and Ker(u) in [B,B]} and some of the arguments of Example~\ref{ex: Nil_c^op} hold for $\Nil_c$ and $\Grp$, we want to emphasize that $\Grp^\op$ is not $\Spl$-protomodular nor $\PSpl$-protomodular. \\
\noindent\textbf{$\Grp^\op$ is not $\Spl$-protomodular}\\
In the pushout~\eqref{diag: po of split mono} take $B=S_3$, $A=\la (1\, 2)\ra$, $C=0$, $\alpha$ the inclusion and $\beta(A_3)=\{\id\}$, $\beta(S_3\backslash A_3)=\{(12)\}$, where $A_3=\{\id, (123),(132)\}$ is the alternating group of order 3 (i.e. of even permutations). In this setting, $u$ is the cokernel of the split monomorphism $\alpha$. Since $\la\la (1\,2)\ra\ra=S_3$, then $u\colon S_3\to S_3/\la\la (1\,2)\ra\ra=0$. In diagram~\eqref{diag: main in SAC}, take $e=\beta$, which is a split epimorphism in $\Grp$. We have, $\Ker(e)=\Ker(\beta)=A_3$, $\Ker(u)=S_3$. So, $\Ker(e)\subseteq \Ker(\beta)\cap \Ker(u)$, but $\Ker(e)=A_3\neq 0$.\\
\noindent\textbf{$\Grp^\op$ is not $\PSpl$-protomodular}\\
Let $Q$ be a non-trivial perfect group, i.e. $0\neq Q=[Q,Q]$. In the pushout~\eqref{diag: po of split mono} take $B=Q\times Q$, $A=\Delta_Q=\{(q,q):q\in Q\}\cong Q$, $C=0$, $\alpha$ to be the inclusion and $\beta(q,q')=(q',q')$. In this setting, $u$ is the cokernel of the split monomorphism $\alpha$. Since $\la\la \Delta_Q\ra\ra=Q\times Q$ (because $Q$ is perfect), then $u\colon Q\times Q\to Q\times Q/\la\la \Delta_Q\ra\ra=0$. In diagram~\eqref{diag: main in SAC}, take $e$ to be the cokernel of the split monomorphism given by the inclusion $Q\times 0 \hookrightarrow Q\times Q$, so that $e\colon Q\times Q \twoheadrightarrow Q\times Q/Q\times 0\cong Q\cong \Delta_Q$. We have, $\Ker(e)=\Ker(\beta)= Q\times 0\cong Q$, $\Ker(u)=Q\times Q$. So, $\Ker(e)\subseteq \Ker(\beta)\cap \Ker(u)$, but $\Ker(e)\cong Q\neq 0$.
\end{example}

\section{$\M$-protomodular categories vs. $\M$-Mal'tsev categories}\label{sec: M-proto vs M-Mal'tsev}

It is well-known from~\cite{MCFPO} that every protomodular category is necessarily a Mal'tsev category. Recall from~\cite{CLP,CPP,CKP} that a \defn{Mal'tsev category} $\C$ is defined through the property that every reflexive relation $R\colon X\relto X$ in $\C$ is an equivalence relation.  An equivalent definition is to ask that any relation $D\colon X\relto Y$ in $\C$ is \defn{difunctional}. The original definitions asks for the base category $\C$ to be regular or just finitely complete. However, these properties on relations can be stated through generalised elements without any assumption on $\C$. In this work we shall focus on the Mal'tsev property concerning difunctionality of relations.

\begin{definition}\label{def of difunctional}
A relation $X\stackrel{d_1}{\longleftarrow} D \stackrel{d_2}{\longrightarrow} Y$ in $\C$ is called \emph{difunctional} when the relation
\begin{equation}\label{W-difunctional}
  \xymatrix@C=40pt{\C(W,X) & \ar[l]_-{\C(W,d_1)} \C(W,D) \ar[r]^-{\C(W,d_2)} & \C(W,Y)}
\end{equation}
in $\Set$ is difunctional~\cite{Riguet}, for every object $W$ of $\C$. More precisely, given morphisms $x,u\colon W\to X$, $y,v\colon W\to Y$, we have $(xD y \wedge uDy \wedge uDv)\Rightarrow xDv$, that is
\begin{equation}\label{eq: difunctional with relations}
\vcenter{
\hbox{\resizebox{0.9\textwidth}{!}{$
\begin{array}{llll}
	\xymatrix@C=10pt@R=10pt{& X \\ W \ar[ur]^-x \ar[dr]_-y \ar@{.>}[rr]|-{\la x,y \ra} & & D, \ar[ul]_-{d_1} \ar[dl]^-{d_2} \\ & Y} &
	\xymatrix@C=10pt@R=10pt{& X \\ W \ar[ur]^-u \ar[dr]_-y \ar@{.>}[rr]|-{\la u,y \ra} & & D, \ar[ul]_-{d_1} \ar[dl]^-{d_2} \\ & Y} &
	\xymatrix@C=10pt@R=10pt{& X \\ W \ar[ur]^-u \ar[dr]_-v \ar@{.>}[rr]|-{\la u,v \ra} & & D\;\;\Rightarrow \ar[ul]_-{d_1} \ar[dl]^-{d_2} \\ & Y}&
	\xymatrix@C=10pt@R=10pt{& X \\ W \ar[ur]^-x \ar[dr]_-v \ar@{.>}[rr]|-{\la x,v \ra} & & D. \ar[ul]_-{d_1} \ar[dl]^-{d_2} \\ & Y}
\end{array}$}}}
\end{equation}
\end{definition}

Given a category with binary products and a class $\M$ of monomorphisms, we say that a relation $X\stackrel{d_1}{\longleftarrow} D \stackrel{d_2}{\longrightarrow} Y$ is in $\M$ when the monomorphism $\la d_1,d_2\ra\colon D\mono X\times Y$ is in $\M$.

\begin{definition}\label{def: M-Mal'tsev} Let $\C$ be a category with binary products and $\M$ a class of monomorphisms of $\C$. We call $\C$ an \defn{$\M$-Mal'tsev category} when any relation $X\stackrel{d_1}{\longleftarrow} D \stackrel{d_2}{\longrightarrow} Y$ in $\M$ is difunctional.
\end{definition}

\begin{example}\label{ex: M-Mal'tsev cats}
\begin{itemize}
	\item[1.] When $\M=\Mono$, an $\M$-Mal'tsev category is just an ``ordinary'' Mal'tsev category.
	\item[2.] When $\M=\SMono$, an $\M$-Mal'tsev category is a weakly Mal'tsev category~\cite{JMF} (see definition in Remark~\ref{rem: proto => Mal'tsev}).
\end{itemize}
\end{example}

We show next that, if a finitely complete category $\C$ is $\M$-protomodular, where $\M$ is a class of monomorphisms (with certain properties), then $\C$ is an $\M$-Mal'tsev category. The proofs of the $\M$-versions in the following results are similar to those for ``ordinary'' protomodular and Mal'tsev categories.
\begin{proposition}\label{prop: M-proto => Pt_Y(CC) is unital} Let $\C$ be an $\M$-protomodular category, where $\M$ is a class of monomorphisms of $\C$. Given an arbitrary pullback of points
	\begin{equation}\label{diag:pb of points}
	\vcenter{\xymatrix@=25pt{ Z\times_Y X \ar@<-2pt>[d]_-{\pi_Z} \ar@<2pt>[r]^-{\pi_X} \pullback & X \ar@<-2pt>[d]_-f \ar@<2pt>[l]^-{i_X} \\
							   Z \ar@<-2pt>[u]_-{i_Z} \ar@<2pt>[r]^-h & Y, \ar@<-2pt>[u]_-s \ar@<2pt>[l]^-t}}
	\end{equation}
the pair $(i_Z,i_X)$ is orthogonal to the class $\M$.
\end{proposition}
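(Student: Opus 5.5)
Our plan is to reduce to Proposition~\ref{prop: M-proto iff orthogonal to M}, or to argue directly with the change-of-base functor $h^*$. In diagram~\eqref{diag:pb of points} the pullback is taken along $h$, which is itself split by $t$, so $f$ and $h$ are split epimorphisms with $ht=1_Y$ and $fs=1_Y$. We have $i_Z=\la 1_Z, sh\ra$ and $i_X=\la th,1_X\ra$, that is, $\pi_Z i_X = tf$ and $\pi_X i_X=1_X$.

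To show orthogonality, suppose $m\colon M\mono Z\times_Y X$ is in $\M$ and $i_Z=ma$, $i_X=mb$ for some $a\colon Z\to M$ and $b\colon X\to M$. Regard $m$ as a morphism of points over $Z$: the codomain point is $(\pi_Z,i_Z)$, and the domain point is $(\pi_Z m, a)$, because $\pi_Z m a=\pi_Z i_Z=1_Z$. This is a morphism in $\Pt_Z(\C)$ and $m\in\M$. We then apply the change-of-base functor $t^*\colon \Pt_Z(\C)\to\Pt_Y(\C)$ along $t\colon Y\to Z$.

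The key observation is that $t^*(\pi_Z,i_Z)$ is (isomorphic to) the point $(f,s)$. Indeed, pulling back $\pi_Z$ along $t$ gives the pullback of $f$ along $ht=1_Y$, namely $X$. The comparison map $X\to Z\times_Y X$ is exactly $i_X=\la th,1_X\ra$; more precisely it is $\la tf,1_X\ra$, and this equals $\la th,1_X\ra$ read correctly, since $h\pi_Z=f\pi_X$. Now $t^*(m)$ is the pullback of $m$ along $i_X$. Because $i_X$ factors through the monomorphism $m$ via $b$, this pullback is an isomorphism: the square with sides $1_X$, $b$, $m$, $i_X$ is a pullback, since $m$ is monic. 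So $t^*(m)$ is an isomorphism, and $\M$-protomodularity (conservativeness of $t^*$ on $\M$) yields that $m$ is an isomorphism.

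Finally, for orthogonality we note that the diagonal is $m^{-1}$, and uniqueness is automatic since $m$ is monic. We must check the sections too: the section of $t^*(\pi_Zm,a)$ is induced by $a t$. Since the conservativeness statement only concerns morphisms in $\Pt$, the only thing to verify is that $m$ commutes with the sections, which holds by $ma=i_Z$. The expected obstacle is bookkeeping: identifying $t^*(\pi_Z,i_Z)$ with $(f,s)$ compatibly with sections, i.e. checking $i_X s = i_Z t$. This follows from $\pi_Z i_X s=tfs=t$, $\pi_X i_X s=s=sht=\pi_X i_Z t$, and $\pi_Z i_Z t=t$. Notably, $b$ is used only to make the pullback of $m$ along $i_X$ trivial, and $a$ only to make $m$ a morphism of points.
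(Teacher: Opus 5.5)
Your proof is correct and is essentially the paper's. The paper uses pullback pasting (the outer rectangle over $ht=1_Y$ is trivially a pullback) to see that the square with sides $i_X$, $f$, $\pi_Z$, $t$ exhibits $X$ as the pullback of the point $(\pi_Z,i_Z)$ along $t$, and then applies Proposition~\ref{prop: M-proto iff orthogonal to M}, whose proof is exactly the conservativeness-of-$t^*$ argument you write out inline; your only slip is notational, since $i_X=\la tf,1_X\ra$ (the expression $\la th,1_X\ra$ is ill-typed), which is what your computations actually use.
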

\begin{proof} Since the total and right diagrams in
\[
	\xymatrix@=25pt{X \ar[r]^-{i_X} \ar@<-2pt>[d]_-f \ar@(u,u)[rr]^-{1_X} & Z\times_Y X \ar@<-2pt>[d]_-{\pi_Z} \ar[r]^-{\pi_X} & X\ar@<-2pt>[d]_-f \\
		Y \ar@<-2pt>[u]_-s \ar[r]_-t \ar@(d,d)[rr]_-{1_Y} & Z \ar@<-2pt>[u]_-{i_Z} \ar[r]_-h & Y \ar@<-2pt>[u]_-s}
\]
are pullbacks, then so is the left diagram. The result follows by Proposition~\ref{prop: M-proto iff orthogonal to M}.
\end{proof}

\begin{proposition}\label{prop: Pt_Y(CC) unital <=> M-Mal'tsev} Let $\C$ be a category with pullbacks and $\M$ a class of monomorphisms of $\C$ with the properties
\begin{itemize}
	\item[(P2)] $\M$ is closed under composition;
	\item[(P4)] for any pullback projections $\pi_Z$, $\pi_X$, the monomorphism $\la \pi_Z,\pi_X\ra\colon Z\times_Y X\mono Z\times X$ is in $\M$;
	\item[(P5)] $\M$ is stable under pullbacks.
\end{itemize}
The following conditions are equivalent:
\begin{itemize}
	\item[(i)] $\C$ satisfies the property of Proposition~\ref{prop: M-proto => Pt_Y(CC) is unital};
	\item[(ii)] any relation $\la d_1,d_2\ra\colon  D \mono X\times Y$ in $\M$ is difunctional.
\end{itemize}
\end{proposition}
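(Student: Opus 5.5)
We adapt the classical argument that, for ordinary protomodular categories, unitality of the fibres $\Pt_Y(\C)$ is equivalent to the Mal'tsev property via difunctionality. Throughout, we read ``$(i_Z,i_X)$ orthogonal to $\M$'' as in Proposition~\ref{prop: M-proto iff orthogonal to M}. Thus, whenever $i_Z$ and $i_X$ both factor through some $m\colon M\mono Z\times_Y X$ in $\M$, the morphism $m$ is an isomorphism.

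\textbf{(i) $\Rightarrow$ (ii).} Let $\la d_1,d_2\ra\colon D\mono X\times Y$ be in $\M$, and take $x,u\colon W\to X$ and $y,v\colon W\to Y$ with $\la x,y\ra,\la u,y\ra,\la u,v\ra$ factoring through $D$. We build a pullback of points over $W\times Y$, or more economically over $W$. First form the pullback $P=D\times_Y D$ of $d_2$ along itself. Working in the slice over $W$, consider the two points over $W\times Y$ given by $X_1=W\times X\times Y$ and $X_2=W\times Y\times X$ (suitably arranged), with sections determined by $u$ and $y$ respectively. Their pullback is $W\times X\times Y\times X$-like, and its elements are triples $(x',y',u'')$.

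Inside this pullback we take the subobject $R$ of generalised triples $(a,b,c)$ such that $(a,b)\in D$ and $(c,b)\in D$, or the analogous one adapted to the four-element condition. The key points to check are the following.
\begin{itemize}
\item[(a)] The inclusion $R\mono$ (pullback) lies in $\M$. It is obtained by pulling back $\la d_1,d_2\ra$, or products and compositions of such, and composing with monomorphisms of the form in (P4). This uses (P5), (P2) and (P4).
\item[(b)] Both sections $i_Z$ and $i_X$ factor through $R$. This is exactly where the hypotheses $\la u,y\ra\in D$, and then $\la x,y\ra,\la u,v\ra\in D$, are used.
\end{itemize}
By (i), $R$ is then the whole pullback. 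The generalised element corresponding to $(x,v)$ therefore lies in $R$, and this yields $\la x,v\ra$ factoring through $D$.

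To keep the construction concrete, we will follow the standard proof of Bourn, working in the fibre over $W$ (or over $Y$ after a change of base). We take $Z=W\times X$ with $h=\pi_W$ and $t=\la 1,u\ra$, and the point $W\times Y\to W$ with section $\la 1,y\rangle$. Then $Z\times_W(W\times Y)\cong W\times X\times Y$, and $R$ is the relation $W\times D$. The point is to arrange the two relevant sections to be $\la 1,u,y\ra$-driven, namely $(w,x)\mapsto(w,x,y(w))$ and $(w,y')\mapsto(w,u(w),y')$. These land in $W\times D$ only after first restricting to the subobject of $W$-elements where the hypotheses hold. Since we argue with generalised elements, we may take $W$ itself as the base, which legitimises this.

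\textbf{(ii) $\Rightarrow$ (i).} Given the pullback of points \eqref{diag:pb of points} and $m\colon M\mono Z\times_Y X$ in $\M$ through which $i_Z$ and $i_X$ factor, we view $\la \pi_Z,\pi_X\ra m\colon M\mono Z\times X$ as a relation. It lies in $\M$ by (P4) and (P2). Now let $\la z,x\ra$ be a generalised element of $Z\times_Y X$, so that $hz=fx$. Then $(z, sh z)\in M$ via $i_Z$, $(tfx,sfx)\in M$ via $i_X s= i_Z t$ composed appropriately, and $(tfx, x)\in M$ via $i_X$. Note that $sfx=shz$. Difunctionality then gives $(z,x)\in M$, so $m$ is a split epimorphism and hence an isomorphism. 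This direction is routine.

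\textbf{Main obstacle.} The main obstacle is step (a) in (i) $\Rightarrow$ (ii): showing that the subobject playing the role of $W\times D$ inside the pullback of points is in $\M$ using only (P2), (P4) and (P5). We will try to present it as the pullback of $\la d_1,d_2\ra$ along the evident map into $X\times Y$, which is in $\M$ by (P5). We then compose with the (P4) monomorphism, if that is needed to relate it to the pullback $Z\times_Y X$.
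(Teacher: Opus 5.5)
Your direction (ii) $\Rightarrow$ (i) is correct and is the paper's argument. (P4) and (P2) make $\la\pi_Z,\pi_X\ra m$ a relation in $\M$. The three memberships come from the factorisations of $i_Z$, $i_Zt=i_Xs$ and $i_X$ through $m$. Difunctionality then makes $m$ a split epimorphism.

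\textbf{The gap is in (i) $\Rightarrow$ (ii), and it is not where you place it.} Step (a) is harmless: anything obtained by pulling back $\la d_1,d_2\ra$ along a morphism into $X\times Y$ is in $\M$ by (P5) alone. The paper uses neither (P2) nor (P4) in this direction.

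What fails is step (b). In your concrete construction, the pullback of the points $(\pi_W,\la 1,u\ra)$ and $(\pi_W,\la 1,y\ra)$ is $W\times X\times Y$, with sections $\la\pi_W,\pi_X,y\pi_W\ra$ and $\la\pi_W,u\pi_W,\pi_Y\ra$. These factor through $W\times D$ only if $\la\pi_X,y\pi_W\ra\colon W\times X\to X\times Y$ and $\la u\pi_W,\pi_Y\ra\colon W\times Y\to X\times Y$ factor through $D$. That would mean every element of $X$ is related to $y$, and $u$ is related to every element of $Y$. The hypotheses give nothing of the sort. Were it so, (i) would make $W\times D\mono W\times X\times Y$ invertible, i.e.\ all pairs related over $W$; for a proper subgroup $D\le X\times Y$ in $\Grp$ with $W=0$, the required factorisations force $D=X\times Y$.

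``Restricting $W$'' does not help. The hypotheses already hold on all of $W$, and the obstruction sits in the $X$- and $Y$-coordinates. Note also that in this construction $x\,D\,y$ and $u\,D\,v$ play no role at all, since $\la 1,x,v\ra$ exists unconditionally. That is a sign the set-up is wrong. Your earlier sketches (points over $W\times Y$, triples with $(a,b),(c,b)\in D$, ``or the analogous one'') are too vague to check.

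\textbf{The missing idea is to choose the points so that their total objects already encode the known memberships.} Then both sections land in the pullback of $D$, and the remaining hypotheses produce the desired generalised element. The paper does this over $D$:
\begin{itemize}
\item Take $P=\Eq(d_2)\times_D\Eq(d_1)$, the pullback of the points $(d_{22},s_2)$ and $(d_{11},s_1)$. Its generalised elements are triples $(a,b,c)$ in $D$ with $d_2a=d_2b$ and $d_1b=d_1c$.
\item The three hypotheses give $\lambda\colon W\to P$ corresponding to $(\la x,y\ra,\la u,y\ra,\la u,v\ra)$.
\item Pull back $\la d_1,d_2\ra$ along $\la d_1d_{21}p_1,d_2d_{12}p_2\ra$, i.e.\ $(a,b,c)\mapsto(d_1a,d_2c)$. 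This gives $\mu\colon Q\mono P$ in $\M$.
\item Both sections $i_1,i_2$ factor through $Q$, via $d_{21}$ and $d_{12}$ respectively. So $\mu$ is invertible, and $\la x,v\ra$ factors through $D$.
\end{itemize}

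Alternatively, your over-$W$ idea can be repaired. Replace $W\times X$ by the pullback of $\la d_1,d_2\ra$ along $\la\pi_X,y\pi_W\ra$; this is a point over $W$, with section induced by $\la 1,u\ra$ using $u\,D\,y$. Replace $W\times Y$ by the pullback along $\la u\pi_W,\pi_Y\ra$, with section induced by $\la 1,y\ra$. Then both sections genuinely factor through the pullback of $D$ along the map to $X\times Y$. Moreover, $x\,D\,y$ and $u\,D\,v$ are exactly what is needed for $(x,v)$ to define a generalised element of the pullback of these two points.
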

\begin{proof} (i) $\Rightarrow$ (ii) Consider the relations as in~\eqref{eq: difunctional with relations}. We take the following pullbacks, and induced morphism $\lambda\colon W\to P$
\[
\xymatrix@=25pt{ W \ar@{.>}[dr]^-{\lambda} \ar@(r,u)[drr]^-{\la \la u,y\ra, \la u,v\ra \ra} \ar@(d,l)[ddr]_-{\la \la x,y \ra, \la u,y\ra \ra}\\
	& P \ar@{}[dr]|-{\caixa{A}} \pullback \ar@<-2pt>[d]_-{p_1} \ar@<2pt>[r]^-{p_2} & \Eq(d_1) \pullback\ar@<-2pt>[d]_-{d_{11}} \ar@<2pt>[r]^-{d_{12}} \ar@<2pt>[l]^-{i_2} & D \ar[d]^-{d_1} \ar@<2pt>[l]^-{s_1}\\
	& \Eq(d_2) \pullback \ar@<-2pt>[d]_-{d_{21}} \ar@<2pt>[r]^-{d_{22}} \ar@<-2pt>[u]_-{i_1}  & D \ar[r]_-{d_1} \ar[d]^-{d_2} \ar@<-2pt>[u]_-{s_1} \ar@<2pt>[l]^-{s_2} & X \\
	& D \ar@<-2pt>[u]_-{s_2} \ar[r]_-{d_2} & Y.
}
\]Next we take the pullback of $\la d_1,d_2\ra$ along $\la d_1 d_{21}p_1, d_2d_{12}p_2\ra$, which is a monomorphism in $\M$ by property (P5), and the induced morphisms $\alpha\colon \Eq(d_2)\to Q$, $\beta\colon \Eq(d_1)\to Q$
\[
	\xymatrix@R=25pt@C=30pt{\Eq(d_2) \ar@{.>}[dr]^-{\alpha} \ar@(r,u)[drrr]^-{d_{21}} \ar@(d,l)[ddr]_-{i_1} \\
		& Q \pullback \ar[rr]^-{q} \ar@{ >->}[d]_-{\mu\in \M} && D \ar@{ >->}[d]^-{\la d_1,d_2\ra\in \M} \\
		& P \ar[rr]_-{\la d_1 d_{21}p_1, d_2d_{12}p_2\ra} && X\times Y
	}
		\xymatrix@R=25pt@C=30pt{\Eq(d_1) \ar@{.>}[dr]^-{\beta} \ar@(r,u)[drrr]^-{d_{12}} \ar@(d,l)[ddr]_-{i_2} \\
		& Q \pullback \ar[rr]^-{q} \ar@{ >->}[d]_-{\mu\in \M} && D \ar@{ >->}[d]^-{\la d_1,d_2\ra\in \M} \\
		& P \ar[rr]_-{\la d_1 d_{21}p_1, d_2d_{12}p_2\ra} && X\times Y.
	}
\]
Since \caixa{A} is a pullback of points, then $(i_1,i_2)$ is orthogonal to $\M$ by assumption. From the commutativity of
\[
	\xymatrix@=25pt{ & Q \ar@{ >->}[d]|-{\mu\in \M} \\
		\Eq(d_2) \ar[ur]^-{\alpha}\ar[r]_-{i_1} & P & \Eq(d_1) \ar[l]^-{i_2} \ar[lu]_-{\beta}}
\]
we conclude that $\mu$ is an isomorphism. We then have $\la d_1,d_2\ra q\mu^{-1}\lambda = \la x,v\ra$, which proves that $xDv$.\\
(ii) $\Rightarrow$ (i) Consider the pullback of points below and a monomorphism $m\in\M$ such that $m\alpha=i_Z$ and $m\beta=i_X$ in
\[
	\xymatrix@=25pt{M \ar@{ >->}[dr]^-{m\in \M}   \\
		& Z\times_Y X \ar@<-2pt>[d]_-{\pi_Z} \ar@<2pt>[r]^-{\pi_X} \pullback & X \ar@(u,r)[ull]_-{\beta} \ar@<-2pt>[d]_-f \ar@<2pt>[l]^-{i_X} \\
		& Z \ar@(l,d)[uul]^-{\alpha} \ar@<-2pt>[u]_-{i_Z} \ar@<2pt>[r]^-h & Y, \ar@<-2pt>[u]_-s \ar@<2pt>[l]^-t}
\]
By (P2) and (P4) it follows that $\la \pi_Zm,\pi_Xm\ra\colon M\mono Z\times X$ is in $\M$; let us denote this relation by $M$. It is easy to check that
\[
\begin{array}{lll}
	\la \pi_Z m, \pi_X m\ra \alpha \pi_Z= \la \pi_Z,sh\pi_Z \ra & \To & \;\;\;\;\pi_Z \,M\, sh\pi_Z, \\
	\la \pi_Z m, \pi_X m\ra \alpha t f \pi_X= \la t f \pi_X, sh\pi_Z\ra & \To & tf \pi_X \,M\, sh\pi_Z, \\
	\la \pi_Z m, \pi_X m\ra \beta \pi_X= \la tf\pi_X,\pi_X \ra & \To & tf \pi_X \,M\, \pi_X.
\end{array}
\]
By difunctionality of $M$ we get $\pi_Z \, M\, \pi_X$, from which we conclude that $m$ is an isomorphism.
\end{proof}

\begin{proposition}\label{prop: protovsMaltsev} Let $\C$ be a category with pullbacks and $\M$ a class of monomorphisms of $\C$ with the properties
\begin{itemize}
	\item[(P2)] $\M$ is closed under composition;
	\item[(P4)] for any pullback projections $\pi_Z$, $\pi_X$, the monomorphism $\la \pi_Z,\pi_X\ra\colon Z\times_Y X\mono Z\times X$ is in $\M$;
	\item[(P5)] $\M$ is stable under pullbacks.
\end{itemize}If $\C$ is an $\M$-protomodular category then it is an $\M$-Mal'tsev category.
\end{proposition}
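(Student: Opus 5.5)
This follows by composing the two preceding propositions. Since $\C$ has pullbacks and $\M$ consists of monomorphisms, Proposition~\ref{prop: M-proto => Pt_Y(CC) is unital} applies directly to the $\M$-protomodular category $\C$. It shows that for every pullback of points as in~\eqref{diag:pb of points}, the pair $(i_Z,i_X)$ is orthogonal to $\M$.

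This is precisely condition (i) of Proposition~\ref{prop: Pt_Y(CC) unital <=> M-Mal'tsev}. That proposition requires only (P2), (P4) and (P5), which are exactly the hypotheses assumed here. We therefore use its implication (i) $\Rightarrow$ (ii) and conclude that every relation $\la d_1,d_2\ra\colon D\mono X\times Y$ in $\M$ is difunctional. By Definition~\ref{def: M-Mal'tsev}, this means that $\C$ is an $\M$-Mal'tsev category.

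The only step that needs checking is that the hypotheses line up. The definition of $\M$-Mal'tsev category requires binary products, and Proposition~\ref{prop: M-proto => Pt_Y(CC) is unital} rests on Proposition~\ref{prop: M-proto iff orthogonal to M}, which was stated for finitely complete $\C$. Both points are covered: $\M$-protomodularity is defined only for finitely complete categories (Definition~\ref{def: M-proto}), so binary products and pullbacks are available. No further obstacle is expected, since all the genuine work is contained in the two cited propositions.
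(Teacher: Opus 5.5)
Your proposal is correct and matches the paper's proof, which also just chains the orthogonality of $(i_Z,i_X)$ to $\M$ for pullbacks of points with the implication (i) $\Rightarrow$ (ii) of the following proposition. Your check that finite completeness, built into the definition of $\M$-protomodularity, supplies the needed binary products and pullbacks is accurate.
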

\begin{proof} This follows immediately from Propositions~\ref{prop: M-proto => Pt_Y(CC) is unital} and \ref{prop: Pt_Y(CC) unital <=> M-Mal'tsev}.
\end{proof}

\begin{remark}\label{rem: proto => Mal'tsev}
\begin{itemize}
	\item[1.] When $\M=\Mor$ or $\M=\Mono$, Proposition~\ref{prop: M-proto => Pt_Y(CC) is unital} translates to: any protomodular category is a Mal'tsev category. Indeed, any pullback of points~\eqref{diag:pb of points} being such that $(i_Z,i_X)$ is jointly extremally epimorphic is equivalent to every $\Pt_Y(\C)$ being a unital category~\cite{MCFPO}, for any object $Y$ of $\C$. This latter is a well-known characterisation of Mal'tsev categories~\cite[Proposition 10]{MCFPO}. Proposition~\ref{prop: Pt_Y(CC) unital <=> M-Mal'tsev} recovers the equivalence between two well-known characterisations of Mal'tsev categories: every $\Pt_Y(\C)$ is a unital category, for any object $Y$ of $\C$, and every relation $D\colon X\relto Y$ is difunctional.
	\item[2.] When $\M=\SMono$, Proposition~\ref{prop: M-proto => Pt_Y(CC) is unital} translates to: any weakly protomodular category is a weakly Mal'tsev category. Recall  from~\cite{NMFweakly} that a \emph{weakly Mal'tsev category} is defined by the property that any pullback of points~\eqref{diag:pb of points} is such that $(i_Z,i_X)$ is jointly epimorphic. Proposition~\ref{prop: Pt_Y(CC) unital <=> M-Mal'tsev} recovers the characterisation of weakly Mal'tsev categories as the finitely complete categories such that every strong relation $D\colon X\relto Y$ is difunctional (see~\cite{JMF}).
\end{itemize}
\end{remark}

\section{$\Ord$-protomodular categories}\label{sect:Ordproto}
In this section we explore the notion of $\M$-protomodular category when the base category $\CC$ is $\Ord$-enriched and $\M$ is the class of all full (and faithful) monomorphisms -- denoted by \defn{ff-monomorphisms} -- of $\CC$.

\begin{definition}\label{def: Ord-proto}
Let $\CC$ be a finitely complete $\Ord$-category. We call $\CC$ an \defn{$\Ord$-protomodular category} when, for any morphism $h\colon Z\to Y$, the change-of-base functor $h^*\colon \Pt_Y(\CC)\to \Pt_Z(\CC)$ is conservative on the class of ff-monomorphisms of $\CC$.
\end{definition}

\begin{lemma}\label{lem: proto => Ord-proto} If $\C$ is a protomodular category, then any $\Ord$-enrichment of $\C$ is an $\Ord$-pro\-to\-mo\-du\-lar category.
\end{lemma}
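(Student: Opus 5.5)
The statement reduces almost immediately to what protomodularity already gives. An $\Ord$-enrichment of $\C$ does not change the underlying category: objects, morphisms, finite limits, points and the change-of-base functors $h^*\colon \Pt_Y(\C)\to\Pt_Z(\C)$ all stay the same. The enrichment only adds a preorder on each hom-set, compatible with composition. In particular, any ff-monomorphism of the enriched category $\CC$ is, by definition, a monomorphism of the underlying category $\C$. So the class of ff-monomorphisms is contained in $\Mono$ (indeed in $\Mor$).

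The plan is therefore the following. Fix $h\colon Z\to Y$ and a morphism $m\colon (f',s')\to(f,s)$ in $\Pt_Y(\CC)$ with $m$ an ff-monomorphism and $h^*(m)$ an isomorphism. First, note that $h^*$ is computed from pullbacks in the underlying category. A possible subtlety is whether "finitely complete $\Ord$-category" means finite limits in the enriched (conical, $\Ord$-weighted) sense. Conical limits in the enriched sense are in particular ordinary limits in the underlying category: their universal property is an isomorphism of preordered hom-sets, which restricts to a bijection of underlying sets. Hence the pullbacks used to define $h^*$ in $\CC$ are pullbacks in $\C$, and $h^*(m)$ is the same morphism in both settings.

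Second, since $\C$ is protomodular, $h^*$ is conservative on all morphisms, by Theorem~\ref{thm: characterization of protomodularity}(i) and Example~\ref{ex: M-proto cats}.1. Therefore $m$ is an isomorphism in $\C$, and hence in $\Pt_Y(\C)$. Third, an isomorphism of the underlying category is automatically an isomorphism in $\CC$, because $\Ord$-enrichment imposes no extra condition on invertibility: composition is monotone, so $m^{-1}$ is a morphism of $\CC$ and the required equations hold. Equivalently, $\Ord$-protomodularity is just $\M$-protomodularity of the underlying category with $\M$ the class of ff-monomorphisms, and $\M$-protomodularity is inherited by subclasses $\M\subseteq\Mor$.

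The only step needing any care is the first one, checking that the enriched finite limits agree with the ordinary ones so that the change-of-base functors coincide. I expect this to be a one-line remark rather than a genuine obstacle.
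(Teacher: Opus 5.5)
Your proposal is correct and is the paper's argument: protomodularity makes every $h^*$ conservative on all (mono)morphisms, and ff-monomorphisms form a subclass of $\Mono$, so conservativity on them follows at once. Your extra check that enriched conical finite limits are ordinary limits, so $h^*$ is the same functor, is harmless but not needed here, since the paper works with ordinary finite completeness.
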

\begin{proof} As mentioned earlier, $\C$ is protomodular if and only if any change-of-base functor is conservative on monomorphisms. In particular, they are conservative on ff-monomorphisms.
\end{proof}

\begin{remark} Propositions~\ref{prop: M-proto iff SS5L for M}, \ref{prop: M-proto iff orthogonal to M} and \ref{prop: pb pp} hold in the context of $\Ord$-categories, where $\M$ is the class of ff-monomorphisms. Propositions~\ref{prop: pb pp for regular cat} and~\ref{prop: regular M-proto iff S5L for M} hold for any (pointed) regular $\Ord$-category with 2-pullbacks, so that 2-pullbacks and pullbacks coincide. Similarly, Proposition~\ref{prop: M-proto => Pt_Y(CC) is unital} holds for any $\Ord$-protomodular category and Proposition~\ref{prop: Pt_Y(CC) unital <=> M-Mal'tsev} for any $\Ord$-category with 2-pullbacks (again, so that 2-pullbacks and pullbacks coincide). Note that the assumptions (P1)--(P5) on morphisms needed for some of these results from Section~\ref{sec: M-protomodularity} and Section~\ref{sec: M-proto vs M-Mal'tsev} to hold, are replaced by the properties (F1)--(F5) on ff-monomorphisms listed below, which are easily seen to be always true in the context of $\Ord$-categories with 2-pullbacks.
\end{remark}
\begin{lemma}\label{pps for ff-ms}
Let $a\colon A\to B$ and $b\colon B\to C$ be morphisms in an $\Ord$-category $\CC$. Then:
\begin{itemize}
	\item[(F1)] if $ba$ is an ff-monomorphism, then $a$ is an ff-monomorphism;
	\item[(F2)] if $a$ and $b$ are ff-monomorphisms, then $ba$ is also an ff-monomorphism;
	\item[(F3)] if $a$ is an ff-monomorphism, then $a\times a$ is an ff-monomorphism;
	\item[(F4)] for any 2-pullback projections $\pi_Z$, $\pi_X$, the morphism $\la \pi_Z,\pi_X\ra\colon Z\times_Y X\mono Z\times X$ is an ff-monomorphism;
	\item[(F5)] ff-monomorphisms are stable under 2-pullbacks.\hfill $\Box$
\end{itemize}
\end{lemma}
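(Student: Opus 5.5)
Throughout, I take an ff-monomorphism $m\colon M\to N$ in an $\Ord$-category to mean a morphism such that, for every object $W$, the monotone map $\CC(W,m)\colon\CC(W,M)\to\CC(W,N)$ is injective and order-reflecting: $mx\le my\Rightarrow x\le y$. Equivalently, $\CC(W,m)$ is an order-embedding. With this reformulation, (F1)--(F3) become statements about order-embeddings of hom-preorders, checked naturally in $W$. I also recall that in an $\Ord$-category with 2-pullbacks, a 2-pullback is a pullback whose hom-preorders $\CC(W,Z\times_Y X)$ carry exactly the order induced from $\CC(W,Z)\times\CC(W,X)$ (pairs $(z,x)$ with $hz=fx$, ordered componentwise). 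I use the same description for binary products, which are 2-products in the paper's setting.

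For (F1), I assume $ba$ is an ff-monomorphism and $ax\le ay$. Then $bax\le bay$ by monotonicity of composition, hence $x\le y$. Injectivity of $\CC(W,a)$ follows in the same way, or simply from antisymmetry-free reasoning with equalities. For (F2), I assume $bax\le bay$; I obtain $ax\le ay$ because $b$ is ff, and then $x\le y$ because $a$ is ff. For (F3), a morphism into $B\times B$ is a pair, ordered componentwise. If $(a\times a)\la x_1,x_2\ra\le(a\times a)\la y_1,y_2\ra$, this means $ax_i\le ay_i$ for each $i$, so $x_i\le y_i$, and hence $\la x_1,x_2\ra\le\la y_1,y_2\ra$. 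Injectivity works the same way.

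For (F4), I use the description of the 2-pullback's hom-preorder given above: $\la p,q\ra\le\la p',q'\ra$ in $\CC(W,Z\times_Y X)$ holds precisely when $\pi_Zp\le\pi_Zp'$ and $\pi_Xp\le\pi_Xp'$. This is exactly the order-reflection of $\la\pi_Z,\pi_X\ra$ into the componentwise order of $\CC(W,Z\times X)$. Monicity is the usual joint monicity of pullback projections.

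For (F5), the step requiring the most care, I take a 2-pullback of an ff-monomorphism $m\colon M\to N$ along $g\colon P\to N$, with projection $\mu\colon Q\to P$ and $q\colon Q\to M$. Suppose $x,y\colon W\to Q$ satisfy $\mu x\le\mu y$. Then $mqx=g\mu x\le g\mu y=mqy$, so $qx\le qy$ since $m$ is ff. Now $\mu x\le\mu y$ and $qx\le qy$ together give $x\le y$ by the defining property of the 2-pullback order. Monicity follows similarly, or from the ordinary pullback stability of monos.

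The only subtle point in the whole argument is the use of 2-pullbacks rather than mere pullbacks in (F4) and (F5). The proof of each item therefore reduces to these one-line order computations.
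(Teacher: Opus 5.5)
Your proof is correct and is the direct hom-wise verification that the paper itself leaves to the reader: the lemma is stated with $\Box$ and described as ``easily seen'' in $\Ord$-categories with 2-pullbacks. One small refinement concerns (F3), which also relies on a 2-dimensional property, namely that $A\times A$ is a 2-product. This does hold in the paper's setting, because $\CC(W,1)$ is a singleton, so the 2-pullback of $A\to 1\leftarrow A$ is a product carrying the componentwise order on $\CC(W,A\times A)$. Your closing remark that only (F4) and (F5) are subtle therefore slightly understates where the enrichment is used.
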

The only properties on ff-monomorphisms concerning the $\Ord$-enrichment of $\CC$ are (F4) and (F5), where 2-pullbacks are mentioned instead of pullbacks. However, if the finitely complete category $\CC$ admits a 2-pullback of a pair of morphisms $(f,g)$, then it coincides with the pullback of $(f,g)$.

The notion of $\Ord$-Mal'tsev category introduced in \cite{CR} can also be seen as a particular instance of the notion of $\M$-Mal'tsev category, when $\M$ is the class of ff-monomorphisms. Hence Proposition~\ref{prop: protovsMaltsev}  gives, in particular, that every $\Ord$-protomodular category is $\Ord$-Mal'tsev.

\begin{remark}
The category $\Ord^\op$, with its natural $\Ord$-enrichment given, for morphisms $f,g\colon X\to Y$, by $f\le g$ if $f(y)\le g(y)$ for all $y\in Y$ is not $\Ord$-protomodular. As we have seen in Example \ref{exe:Ord}, $\Ord^\op$ is an $\SMono$-protomodular category which is not protomodular. In this enriched context, ff-monomorphisms coincide with monomorphisms (that is, surjective monotone maps in $\Ord$), not with strong monomorphisms, hence $\Ord$-protomodularity fails because protomodularity fails.
\end{remark}

\begin{example} {\bf $\Ord$-enrichments of $\OrdGrp$.}
We start by recalling two different $\Ord$-enrichment of $\OrdGrp$ studied in \cite{CMR}. We denote by $\OOrdGrp$ the one given, for any morphisms $f,g\colon X\to Y$, by $f\le g$ if, for all $x\in X$, $f(x)\le g(x)$, while we denote by $\OOrdGrp_+$ the other enrichment, where $f\le g$ if $f(x)\le g(x)$ for all positive elements $x$ of $X$. We point out that the former enrichment induces an equivalence relation on the morphisms.

As shown in \cite{CM-FM} and in Example \ref{exe:Grp} respectively, $\OrdGrp$ is not a protomodular category but it is $\SMono$-protomodular. Here we show that neither $\OOrdGrp$ nor $\OOrdGrp_+$ are $\Ord$-protomodular. The failure of this property is due to the fact that, for both $\Ord$-enrichments, ff-monomorphisms do not characterise ``initial monomorphisms", that is, injective homomorphisms whose domain has the initial topology. Indeed:
\begin{itemize}
\item a monomorphism $m\colon X'\to X$ is an ff-monomorphism in $\OOrdGrp$ if and only if, for all $x,x'\in X'$, $x\cong x'$ whenever $m(x)\cong m(x')$, so, in particular, if the order on $X'$ is anti-symmetric $m$ is an ff-monomorphism if and only if it is monomorphic;
\item a monomorphism $m\colon X'\to X$ is an ff-monomorphism in $\OOrdGrp_+$ if and only if, for all $x,x'\in X'$, if $0\le x$ and $0\le x'$, then $x\leq x'$ whenever $m(x)\le m(x')$. Hence if the only positive element of $X'$ is $0$, $m$ is an ff-monomorphism if and only if it is monomorphic.
\end{itemize}
Diagrams \eqref{diag:grp} and \eqref{diag:grpp} show, respectively, that $\OOrdGrp$ and $\OOrdGrp_+$ fail to satisfy the ff-monomorphism version of protomodularity.
\begin{equation}\label{diag:grp}
\vcenter{\xymatrix@=25pt{ (\ZZ,\NN) \ar@{ >->}[r]^-{\la 1_{\ZZ},0\ra} \ar@{=}[d] &  (\ZZ\times \ZZ, P_\prod) \ar@<2pt>[r]^-{\pi_2} \ar[d]_-{1_{\ZZ\times \ZZ}} & (\ZZ,\NN) \ar@<2pt>[l]^-{\la 0, 1_{\ZZ}\ra} \ar@{=}[d] \\
		(\ZZ,\NN) \ar@{ >->}[r]_-{\la 1_{\ZZ},0\ra} & (\ZZ\times \ZZ, P_\lex) \ar@<2pt>[r]^-{\pi_2} & (\ZZ,\NN), \ar@<2pt>[l]^-{\la 0, 1_{\ZZ}\ra}}}
\end{equation}
\begin{equation}\label{diag:grpp}
\vcenter{\xymatrix@=25pt{ (\ZZ,\{0\}) \ar@{ >->}[r]^-{\la 1_{\ZZ},0\ra} \ar@{=}[d] &  (\ZZ\times \ZZ, P_\prod) \ar@<2pt>[r]^-{\pi_2} \ar[d]_-{1_{\ZZ\times \ZZ}} & (\ZZ,\NN) \ar@<2pt>[l]^-{\la 0, 1_{\ZZ}\ra} \ar@{=}[d] \\
		(\ZZ,\{0\}) \ar@{ >->}[r]_-{\la 1_{\ZZ},0\ra} & (\ZZ\times \ZZ, P_\lex) \ar@<2pt>[r]^-{\pi_2} & (\ZZ,\NN), \ar@<2pt>[l]^-{\la 0, 1_{\ZZ}\ra}}}
\end{equation}
In both diagrams the order of $\bkZ\times\bkZ$ in the top row is the product order, while in the bottom row is the lexicographic order (see \cite{CM-FM} for details); that is, in \eqref{diag:grp} the positive cones are given by $P_\prod=\bkN\times\bkN$ and $P_\lex=\{(n,m)\in \bkZ\times\bkZ\,;\,m>0\}\,\cup\,\{(n,0)\,;\,n\in\bkN\}=\bkZ\times\bkN_1\,\cup\,\bkN\times\{0\}$, while in \eqref{diag:grpp} $P_\prod=\{0\}\times\bkN$ and $P_\lex=\bkZ\times\bkN_1\,\cup\,\{(0,0)\}$. In both cases $P_\prod \subset P_\lex$ and so the identity $1_{\bkZ\times\bkZ}$ is a monotone bijection but not an isomorphism.

\end{example}

\begin{remark}
\begin{enumerate}
\item[1.] As shown in \cite{CR}, $\OOrdGrp_+$ is $\Ord$-Mal'tsev, hence this is an example of an $\Ord$-Mal'tsev category which is not $\Ord$-protomodular.

\item[2.] We do not have an example of an $\Ord$-protomodular category which is not protomodular.

\item[3.] We point out that the notion of $\Ord$-protomodular is not directly related to the notion of lax protomodularity introduced in \cite{CMR}: lax protomodularity deals with the properties of the lax comma change-of-base functor.
\end{enumerate}
\end{remark}

\noindent\textbf{Disclosure statement}
Examples~\ref{ex: paratopological groups} and~\ref{ex: Nil_c^op} were obtained with the help of Anthropic's Claude Fable 5.1 (via Claude Code), in response to our direct questions whether there were such examples. We did not use any other help from AI to produce this paper.

\end{document}